\documentclass[10pt]{article}


\usepackage{amsmath,amsthm}
\usepackage{amssymb}
\usepackage{url}
\usepackage{srcltx}
\usepackage{enumerate}
\usepackage[latin1]{inputenc}
\usepackage{mathrsfs}
\usepackage{color}
\usepackage{curves}
\usepackage{eepic}
\usepackage{epic}
\usepackage{graphics,color}
\usepackage{pstricks}
\usepackage{pstricks-add,pst-node}
\usepackage{subfigure}
\usepackage{pstricks}
\usepackage{pstricks-add,pst-node}
\usepackage{curves}

\renewenvironment{proof}{\par \noindent \textbf{Proof.}
}{\hfill$\Box$\medskip}

\newtheorem{theorem}{Theorem}
\newtheorem{corollary}[theorem]{Corollary}

\newtheorem{proposition}[theorem]{Proposition}
\newtheorem{pr}[theorem]{Property}
\newtheorem{lem}[theorem]{Lemma}
\newtheorem{conj}[theorem]{Conjecture}

\date{\today}

 \title{Relaxed Locally Identifying Coloring of graphs }

  \author{M\'{e}ziane A\"{I}DER\thanks{Laboratoire LaROMaD.  Facult\'{e} des Math\'{e}matiques, U.S.T.H.B. 
   El Alia Bab-Ezzouar  16111, Algiers, Algeria}, Sylvain GRAVIER\thanks{Institut Fourier - SFR Maths \`{a} Modeler.UMR 5582 CNRS/Universit\'{e} Joseph Fourier \ 
    100 rue des maths, BP 74, 38402 St Martin d'H\`{e}res, France}, Souad~ SLIMANI \footnotemark[1], \footnotemark[2]}
  
\begin{document}

\maketitle

\begin{abstract}

A \textit{locally identifying coloring} ($lid$-coloring) of a graph is a  
proper coloring such that the sets of colors appearing in the closed neighborhoods of any pair of adjacent vertices 
having distinct neighborhoods are distinct. Our goal is to study a \textit{relaxed locally identifying coloring} 
($rlid$-coloring)  of a graph that is similar to locally identifying coloring for which the coloring is not  necessary 
proper.
We denote by $\chi_{rlid}(G)$ the minimum number of colors used in a relaxed locally identifying coloring of a graph $G$ 

In this paper, we prove that the problem of deciding that $\chi_{rlid}(G)=3$ for a $2$-degenerate planar graph $G$ is 
$NP$-complete. We give several bounds of  $\chi_{rlid}(G)$ and construct  graphs 
for which some of these bounds are tightened. 
Studying some families of graphs allows us to compare this parameter  with the minimum number of colors used 
in a locally identifying coloring of a graph $G$ ($\chi_{lid}(G)$), the size of a minimum identifying 
code of $G$ ($\gamma_{id}(G)$)  and the chromatic number of $G$ ($\chi(G)$).
\end{abstract}

\section{Introduction}\label{sec1}
Let $G=(V,E)$ be a simple undirected finite graph. Let $c: V\longrightarrow \mathbb{N}$  be a coloring of the vertices of $G$.
The coloring $c$  is  an \textit{identifying coloring} if  any pair of vertices $u$ and $v$ satisfies the property ($\mathcal P$): 
 $c(N[u])\neq c(N[v])$.  Observe that if $G$ has two distinct vertices $u$ and $v$ such that $N[u]=N[v]$ then there is 
no such coloring and we say the  vertices $u$ and $v$ are \textit{twins} in $G$.
Note that the coloring is not necessary proper. We define the 
\textit{identifying chromatic number} of $G$,
and denote by $\chi_{id}(G)$, the minimum number of colors required by an identifying coloring of $G$.
This notion was introduced by Parreau \cite{Aline}. In order to give a coloring version of the well-known 
\textit{identifying codes} defined by \textit{Karpovsky et al.} \cite{karpovsky}. 
 Parreau \cite{Aline} gave an upper and a lower bounds of the identifying chromatic number of a 
free-twin graph.  She characterized the free-twin graphs for which the identifying chromatic number  
is the number of vertices of the graph. 
\noindent In \cite{Parreau}, the notion of \textit{locally identifying coloring of graphs} ($lid$-coloring) 
was introduced and defined as follows: for any two adjacent vertices $u$ and $v$, the coloring $c$ satisfies both
the condition ($\mathcal{P}$) and the condition ($\mathcal{Q}$) : $c(u)\neq c(v)$.  
The  \textit{locally identifying chromatic number}
of $G$, denoted by $\chi_{lid}(G)$, is the smallest number of colors used in any $lid$ coloring of $G$.
Esperet et al. \cite{Parreau} gave several bounds of the  locally identifying chromatic number  for different families 
of graphs as planar graphs, some subclasses of perfect graphs and graphs with bounded maximum degree. They also proved 
that the problem to decide whether a subcubic bipartite graph with large girth is $3-lid$-colorable is an 
$NP$-Complete problem.
An upper bound for any graph was given in term of the maximum degree \cite{Parreau3} and the same authers also gave a bound 
of $\chi_{lid}$ for chordal graphs in term of the maximum degree and the chromatic number. 
\textit{Gon\c{c}alves et al.} \cite{Goncalves} proved that for any graph class of bounded expansion, the lid-chromatic number 
is bounded. They also showed that $\chi_{lid}$ is bounded for any class of minor closed classes of graphs and they gave 
an explicit upper 
bound for $\chi_{lid}$ of planar graphs.\\
\noindent Consider now a coloring of a graph $G$  satisfying only the condition ($\mathcal{P}$) for any pair of 
adjacent vertices.  We obtain the notion of  \textit{relaxed locally identifying coloring } ($rlid$-coloring) of a graph,on which we focus 
in this paper. Define the relaxed locally identifying chromatic number of a graph $G, \; \; \chi_{rlid}(G)$, as the smallest 
number of colors used in a relaxed locally identifying coloring.\\ 
Note that if $G$ contains twins $u$ and $v$ we have $c(N[u])=c(N[v])$.  One may ask which influence have twins for 
$rlid$-coloring?\\
To answer this question, let $\mathcal{R}$ be an equivalence relation defined as follows: for all vertices 
$u, \; \; v \in V(G)$, we have $u \mathcal{R}v $ if and only if $N[u]=N[v]$.
Denote by $G/ \mathcal{R}$ the maximal twin-free subgraph of $G$ and let $t(G)$ represent the number of  
equivalence-classes having at least two vertices in $G$.
\begin{theorem}{\label{premiere}}
Let $G/ \mathcal{R}$ be a maximal twin-free subgraph  of a connected graph $G$.
Then, we have $\chi_{rlid}(G/ \mathcal{R})-t(G)\leq \chi_{rlid}(G)\leq \chi_{rlid}(G/ \mathcal{R})$.
\end{theorem}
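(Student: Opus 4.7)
The plan is to prove the two inequalities independently, each time by transporting an $rlid$-coloring between $G$ and $G/\mathcal{R}$.

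For the upper bound $\chi_{rlid}(G) \leq \chi_{rlid}(G/\mathcal{R})$, I would start from an optimal $rlid$-coloring $c$ of $G/\mathcal{R}$ and lift it to a coloring $\hat c$ of $G$ by giving every vertex $w \in V(G)$ the color of the representative $w'$ of its equivalence class. No new colors are introduced. To check the $rlid$ property, take adjacent $u,v \in V(G)$ with $N_G[u] \neq N_G[v]$; since twins share closed neighborhoods, $u$ and $v$ lie in distinct classes, so their representatives $u',v'$ are distinct and adjacent in $G/\mathcal{R}$. The key observation is that $N_G[u]$ is exactly the union of the classes $[w']$ over $w' \in N_{G/\mathcal{R}}[u']$, and all vertices of each such class share a single color under $\hat c$. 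This collapses $\hat c(N_G[u])$ to $c(N_{G/\mathcal{R}}[u'])$, similarly for $v$, and the $rlid$ property of $c$ in the twin-free graph $G/\mathcal{R}$ concludes the case.

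For the lower bound $\chi_{rlid}(G/\mathcal{R}) - t(G) \leq \chi_{rlid}(G)$, which is the more delicate direction, I would start from an optimal $rlid$-coloring $c$ of $G$ and build a coloring $c'$ of $G/\mathcal{R}$ as follows: keep $c'(w') = c(w')$ whenever the class of $w'$ is a singleton, and reserve one fresh private color $x_{[w']}$ for each non-trivial class, setting $c'(w') = x_{[w']}$ at its representative. This uses at most $\chi_{rlid}(G) + t(G)$ colors, so the inequality will follow once $c'$ is shown to be a valid $rlid$-coloring of $G/\mathcal{R}$.

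The main obstacle is precisely this last verification. For adjacent $u', v' \in V(G/\mathcal{R})$, twin-freeness together with the twin relation in $G$ forces $N_G[u'] \neq N_G[v']$, so the $rlid$ property of $c$ provides a color $x$ in, say, $c(N_G[u']) \setminus c(N_G[v'])$, witnessed by some $w \in N_G[u']$ with $c(w) = x$. I would then case-split on whether the class $[w]$ is trivial. If it is, then $w$ itself lies in $V(G/\mathcal{R})$ with $c'(w) = x$, and the fact that the colors used on non-trivial class representatives are fresh (disjoint from the original palette of $c$) prevents $x$ from being attained at any vertex of $N_{G/\mathcal{R}}[v']$. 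If $[w]$ is non-trivial, I would instead use $x_{[w]}$ as the distinguishing color: since $x_{[w]}$ is unique to the representative $w'$ of $[w]$, any reappearance in $N_{G/\mathcal{R}}[v']$ would force $w'$, and hence by the twin relation $w$ itself, to lie in $N_G[v']$, contradicting $x \notin c(N_G[v'])$. In both cases $c'(N_{G/\mathcal{R}}[u']) \neq c'(N_{G/\mathcal{R}}[v'])$, finishing the proof.
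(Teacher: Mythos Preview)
Your proposal is correct and follows essentially the same approach as the paper: for the upper bound you lift an optimal $rlid$-coloring of $G/\mathcal{R}$ to $G$ by assigning each vertex the color of its class representative, and for the lower bound you restrict an optimal $rlid$-coloring of $G$ to $G/\mathcal{R}$ while recoloring the $t(G)$ representatives of non-trivial classes with fresh private colors. The paper's proof presents exactly these two constructions but asserts without argument that the resulting colorings are $rlid$; your case analysis (distinguishing whether the witness vertex $w$ lies in a trivial or non-trivial class, and exploiting that the fresh colors are disjoint from the original palette and pairwise distinct) supplies the verification the paper omits.
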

\begin{proof} 
Consider a $rlid$-coloring $c$ of $G/ \mathcal{R}$, and prove that $c$ also is a $rlid$-coloring of $G$.
For each vertex $x$ and its twin $y$ in $G$, put $c(x)=c(y)$. Since in $G$, we do not interest to distinguish the twins then 
$c$ defines a $rlid$-coloring of $G$.\\
Now, prove the second inequality. Let $c$ be a $rlid$-coloring of $G$.  Consider the coloring $c'$ defined as follows: 
$c'(u)=c(u)$ if the vertex $u$ has no twin in $G$ and color $t(G)$ other vertices of $G/ \mathcal{R}$  with different 
colors $\chi_{rlid}(G)+1$ until  $\chi_{rlid}(G)+t(G)$.  This coloring gives a $rlid$-coloring of $G/ \mathcal{R}$.
\end{proof}

Note that if $G$ is twin-free then $\chi_{rlid}(G)= \chi_{rlid}(G/ \mathcal{R})$.  In  Section \ref{sec4},  we exhibit 
an example for which the upper bound is tighten.
In this paper, we are interested in a studying $\chi_{rlid}$ of a graph $G$. If $G$ contains twins, we are interested  in 
separating
all pairs of adjacent vertices except twins in terms of $G/ \mathcal{R}$. 
We give several bounds of the relaxed locally identifying chromatic number
 for some subclasses of graphs and compare  $\chi_{rlid}$ with both $\chi_{lid}$  and $\chi$ (the chromatic number).\\
Our starting result is given as follows:
\begin{theorem}\label{main}
Let $G$ be a graph of order $n$ and $G/ \mathcal{R}$ be a maximal twin-free subgraph of $G$. \\
Then we have $\log \omega(G/ \mathcal{R}) +1 \leq \chi_{rlid}(G)\leq n$.
\end{theorem}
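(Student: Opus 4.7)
The plan is to establish the two inequalities separately. For the upper bound $\chi_{rlid}(G)\leq n$, I would exhibit the trivial coloring that assigns a distinct color to each of the $n$ vertices. Then for any pair of vertices $u,v$ with $N[u]\neq N[v]$, the sets $c(N[u])$ and $c(N[v])$ are in bijection with $N[u]$ and $N[v]$ via $c$, hence distinct; in particular this holds for the adjacent non-twin pairs, so the coloring is $rlid$.

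For the lower bound, I would first fix a maximum clique in $G/\mathcal{R}$ of size $k=\omega(G/\mathcal{R})$ and lift it to $G$ by picking one representative per twin class. This produces a set $K'\subseteq V(G)$ of $k$ pairwise adjacent vertices whose closed neighborhoods in $G$ are pairwise distinct (they come from distinct twin classes).

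Now let $c$ be any $rlid$-coloring of $G$ on $\chi_{rlid}(G)$ colors. Since the vertices of $K'$ are pairwise adjacent with pairwise distinct closed neighborhoods, the sets $c(N[u])$ for $u\in K'$ must be pairwise distinct. The key observation is that every such set contains $C_0:=c(K')$, because $K'\subseteq N[u]$ for each $u\in K'$. Writing $c(N[u])=C_0\cup T_u$ with $T_u\subseteq\{1,\dots,\chi_{rlid}(G)\}\setminus C_0$, the distinctness of the $c(N[u])$ forces the $T_u$'s to be pairwise distinct, so $k\leq 2^{\chi_{rlid}(G)-|C_0|}\leq 2^{\chi_{rlid}(G)-1}$ since $|C_0|\geq 1$. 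Rearranging yields $\chi_{rlid}(G)\geq \log k+1$.

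The only delicate point is the ``$+1$'' in the bound, which I get from the fact that $|C_0|\geq 1$ whenever $K'$ is non-empty; everything else is a straightforward counting argument on subsets of the palette. I do not foresee a significant obstacle beyond carefully accounting for this constant shift and verifying that lifting the clique of $G/\mathcal{R}$ back to $G$ preserves both adjacency and distinctness of closed neighborhoods.
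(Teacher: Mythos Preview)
Your proposal is correct and follows essentially the same route as the paper. The upper bound via the injective coloring is exactly the trivial argument the paper leaves implicit, and your lower bound is the content of the paper's Lemma~\ref{important}: both of you observe that for a maximum clique $K$ of $G/\mathcal{R}$ every $c(N[u])$ contains $c(K)$, so the $k$ distinct sets $c(N[u])$ are determined by their intersections with the palette outside $c(K)$, giving $k\le 2^{\chi_{rlid}(G)-|c(K)|}\le 2^{\chi_{rlid}(G)-1}$; your explicit ``lift representatives back to $G$'' step is a minor clarification of what the paper does tacitly by treating $G/\mathcal{R}$ as a subgraph.
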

Where $\omega(G)$ represents the maximal size of a clique of $G$.\\
We will prove this theorem in  Section \ref{sec4}. In Section \ref{sec3}, we prove that the problem of 
deciding that $\chi_{rlid}(G)=3$ is $NP$-complete for a connected $2$-degenerate planar graph $G$ without twins and it is polynomial 
for a bipartite graph. In Section \ref{sec4}, we start by proving that the lower bound of Theorem \ref{premiere} is tight.
We characterize graphs $G$ satisfying $\chi_{rlid}(G)=n$ . We show the lower bound of Theorem \ref{main} and exhibit a 
family of graphs for which this bound is attained. We also study the split graphs for which we give an upper and a
lower bound of  $\chi_{rlid}$, and we construct two graphs which  tighten these bounds.\\
So, this paper is structured as follows: the next section presents basic definitions  
used in this paper. Then in Section $3$, we start by studying the complexity of this problem and we show that 
non trivial bipartite graphs are $3-rlid$. Further Section $4$ is spent to establish relationship between  $\chi_{id}$ , 
$\gamma_{id}$ and $\chi_{rlid}$. Section \ref{sec5} is spent to study the split graphs. 
We give an upper and a lower bound for these graphs and we exhibit a graph for which the lower bound is tight.
 Finally, we conclude by some remarks and some open questions.
\section{Useful definitions}\label{sec2}
Let $G=(V,E)$ be a finite connected graph, where $V$ (we also write $V(G)$) is the vertex set and $E$ (we also write $E(G)$ 
 is the edge set. 
We denote by $N(x)$ (resp. $N[x]$) the open (resp. closed) 
neighborhood of $x$, the set of all adjacent vertices of $x$ and  we have $N[x]=N(x) \cup \{x\}$.

A vertex $x$ is a twin of another vertex $y$ if we have  $N[x]=N[y]$. 
A graph $G$ is called \textit{twin-free} if $G$ contain no twin. 
The symmetric difference of two vertices $x$ and $y$, denoted by $N[x]\bigtriangleup N[y]$,
is the set of vertices  $(N[x]\setminus N[y])\cup (N[y]\setminus N[x])$. The maximal size of a clique in a graph $G$ 
is denoted by $\omega(G)$. 
A \textit{universal vertex} of a graph  is a vertex adjacent to all the others. 
\noindent A planar graph is a graph which can be drawn in the plane without any  edges crossing. 
Let $k$ be an integer, a graph $G$ is $k$-degenerate if every subgraph of $G$ has a vertex of degree at most $k$.\\
A subset $C$ of vertices of $G$ is \textit{an identifying code } of $G$ if $C$ is  a dominating set of $G$ 
(i.e. for each vertex $v\in V(G)$, we have  $N[v]\cap C \neq \emptyset$ ) and $C$ is a separating set of $G$ 
(i.e. for each pair of distinct vertices $u,v\in V(G)$, $N[u]\cap C \neq N[v]\cap C$). We denote by $\gamma_{id}(G)$ 
the minimum cardinality of an identifying code of $G$.\\
Since an $id$-coloring and a $lid$-coloring  are $rlid$-coloring, we have trivially \\
$\chi_{rlid}(G)\leq \chi_{id}(G)$ and  $\chi_{rlid}(G)\leq \chi_{lid}(G)$.\\
Given two graphs $G_1=(V_1,E_1)$ and $G_2=(V_1,E_1)$, ${G_1}\Join {G_2}$ is the \textit{join graph} of $G_1$ and $G_2$, in
which the vertex set is $V_1\cup V_2$ and the edge set is $E_1 \cup E_2 \cup \{v_1 v_2 |\;  v_1\in V_1, \; v_2\in V_2\}$.\\

Give $p\geq 2$ an integer, we define the graph $H_p=(V, E)$ from a clique $K$ of size $2^p$, 
and three stable sets $S_1, S_2, S_3$ of size $p$ as follows (See Figure. \ref{H_p}).\\
\begin{figure}[ht]
\begin{center}
\begin{pspicture}(0,0)(10,11.5)
\begin{tiny}
\psframe(1,1)(10,2.5)
\psellipse[fillcolor=lightgray](3,6)(2,0.75)
\psellipse[fillcolor=lightgray](8,6)(2,0.75)
\psellipse[fillcolor=lightgray](3,10)(2,0.75)

\pscircle[fillstyle=solid,fillcolor=black](1.3,2){0.07}
\pscircle[fillstyle=solid,fillcolor=black](1.7,2){0.07}
\pscircle[fillstyle=solid,fillcolor=black](2.2,2){0.07}
\pscircle[fillstyle=solid,fillcolor=black](4.2,2){0.07}
\pscircle[fillstyle=solid,fillcolor=black](4.7,2){0.07}
\psline[linestyle=dotted](2.6,2)(4,2)
\pscircle[fillstyle=solid,fillcolor=black](6.2,2){0.07}
\pscircle[fillstyle=solid,fillcolor=black](6.9,2){0.07}
\pscircle[fillstyle=solid,fillcolor=black](8.5,2){0.07}
\pscircle[fillstyle=solid,fillcolor=black](9.5,2){0.07}
\psline[linestyle=dotted](7.3,2)(8.7,2)

\pscircle[fillstyle=solid,fillcolor=black](1.7,6){0.07}
\pscircle[fillstyle=solid,fillcolor=black](2.2,6){0.07}
\pscircle[fillstyle=solid,fillcolor=black](4.2,6){0.07}
\pscircle[fillstyle=solid,fillcolor=black](4.7,6){0.07}
\psline[linestyle=dotted](2.6,6)(4,6)

\pscircle[fillstyle=solid,fillcolor=black](1.7,10){0.07}
\pscircle[fillstyle=solid,fillcolor=black](2.2,10){0.07}
\pscircle[fillstyle=solid,fillcolor=black](4.2,10){0.07}
\pscircle[fillstyle=solid,fillcolor=black](4.7,10){0.07}
\psline[linestyle=dotted](2.7,10)(4,10)

\pscircle[fillstyle=solid,fillcolor=black](6.2,6){0.07}
\pscircle[fillstyle=solid,fillcolor=black](6.9,6){0.07}
\pscircle[fillstyle=solid,fillcolor=black](9.5,6){0.07}
\pscircle[fillstyle=solid,fillcolor=black](8.5,6){0.07}
\psline[linestyle=dotted](7.2,6)(8,6)

\psline[linestyle=solid](1.7,2)(1.7,10)
\psline[linestyle=solid](2.2,2)(2.2,10)
\psline[linestyle=solid](4.2,2)(4.2,10)
\psline[linestyle=solid](4.7,10)(4.7,2)

\psline[linestyle=solid](6.2,2)(6.2,6)
\psline[linestyle=solid](6.2,2)(6.9,6)

\psline[linestyle=solid](6.9,2)(6.9,6)
\psline[linestyle=solid](6.9,2)(9.5,6)
\psline[linestyle=solid](6.9,2)(8.5,6)

\psline[linestyle=solid](9.5,2)(6.2,6)
\psline[linestyle=solid](9.5,2)(6.9,6)
\psline[linestyle=solid](9.5,2)(7.5,6)
\psline[linestyle=solid](9.5,2)(8.5,6)
\psline[linestyle=solid](9.5,2)(9.5,6)

\psline[linestyle=solid](8.5,2)(6.2,6)
\psline[linestyle=solid](8.5,2)(6.9,6)
\psline[linestyle=solid](8.5,2)(7.5,6)
\psline[linestyle=solid](8.5,2)(8,6)

\begin{tiny}
\uput[45](1,1.5){$x_\emptyset$}
\uput[45](1.5,1.5){$x_{\{1\}}$}
\uput[45](2,1.5){$x_{\{2\}}$}
\uput[45](3.7,1.5){$x_{\{p-1\}}$}
\uput[45](4.6,1.5){$x_{\{p\}}$}
\uput[45](9,1.5){$x_{\{2^{p} -1\}}$}
\uput[45](7.45,1.5){$x_Q$}
\end{tiny}\end{tiny}
\uput[45](10.1,1.6){$K$}
\uput[45](3,7){$S_1$}
\uput[45](3,11){$S_2$}
\uput[45](8,7){$S_3$}
\end{pspicture}
\caption{The graph $H_p$ and its components $K, S_1, S_2$ and $S_3$} \label{H_p}
\end{center}
\end{figure}
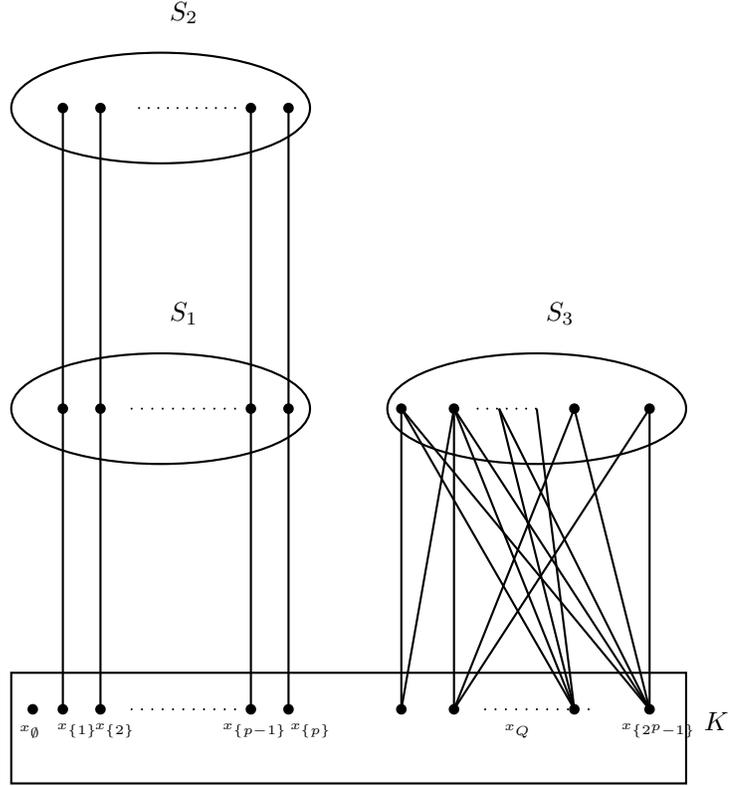

\noindent  $K=\{x_Q, Q\in \mathcal{P}(p)\}$: it means that $Q$ is a subset of $\{1,2,\dots,p\}$.\\
 $S_1=\{y_1, y_2, \dots, y_p\}$, $S_2=\{y'_1, y'_2, \dots, y'_p\}$ and 
$S_3=\{z_1, z_2, \dots, z_p\}$.\\
The edges are defined by:\\
$\circ$ for all $i\in [p]$,  $x_{\{i\}} y_i\in E$ and $y_i y'_i\in E$ .\\
$\circ$ and for all $Q \subseteq \mathcal{P}[p]$ with $|Q|\geqslant 2$, $x_{Q} z_i \in E$ iff $i\in Q$.\\
\noindent For an integer $k\geqslant 1$, we denote by $\textrm{P}^{k-1}_{2k}$ the $k-1$ power of the path on $2k$ vertices.

\noindent Given a graph $G$, we construct a graph $G^*$  by replacing each edge of $G$ by 
a path of length $3$, and adding to each vertex of $G$ a pendant vertex (See Figure. \ref{G^*}).
\begin{figure}[h]
\begin{center}
\begin{pspicture}(0,0)(10,4.5)
\pscircle[fillstyle=solid,fillcolor=black](1,1){0.07}\uput[45](0.7,0.5){$x$}
\pscircle[fillstyle=solid,fillcolor=black](3,1){0.07}\uput[45](3,0.5){$x'$}
\pscircle[fillstyle=solid,fillcolor=black](2,3){0.07}\uput[45](1.8,3.1){$y$}
\pscircle[fillstyle=solid,fillcolor=black](3,4){0.07}\uput[45](3.1,4){$y'$}
\psline[linestyle=solid](1,1)(3,1)
\psline[linestyle=solid](1,1)(2,3)
\psline[linestyle=solid](3,1)(2,3)
\psline[linestyle=solid](2,3)(3,4)

\pscircle[fillstyle=solid,fillcolor=black](6,1){0.07}\uput[45](5.7,0.5){$x$}
\pscircle[fillstyle=solid,fillcolor=black](8,1){0.07}\uput[45](8,0.5){$x'$}
\pscircle[fillstyle=solid,fillcolor=black](7,3){0.07}\uput[45](6.8,3.1){$y$}
\pscircle[fillstyle=solid,fillcolor=black](8,4){0.07}\uput[45](8.1,4){$y'$}
\pscircle[fillstyle=solid,fillcolor=black](6.6,1){0.07}
\pscircle[fillstyle=solid,fillcolor=black](7.4,1){0.07}
\pscircle[fillstyle=solid,fillcolor=black](6.3,1.6){0.07}\uput[45](5.8,1.4){$a$}
\pscircle[fillstyle=solid,fillcolor=black](6.7,2.4){0.07}\uput[45](6.3,2.2){$b$}
\pscircle[fillstyle=solid,fillcolor=black](7.3,2.4){0.07}
\pscircle[fillstyle=solid,fillcolor=black](7.7,1.6){0.07}
\pscircle[fillstyle=solid,fillcolor=black](7.3,3.3){0.07}
\pscircle[fillstyle=solid,fillcolor=black](7.7,3.7){0.07}

\psline[linestyle=solid](6,1)(8,1)
\psline[linestyle=solid](6,1)(7,3)
\psline[linestyle=solid](8,1)(7,3)
\psline[linestyle=solid](7,3)(8,4)

\pscircle[fillstyle=solid,fillcolor=black](9,4){0.07}
\pscircle[fillstyle=solid,fillcolor=black](6,3){0.07}
\pscircle[fillstyle=solid,fillcolor=black](5,1){0.07}\uput[45](4.8,0.5){$t$}
\pscircle[fillstyle=solid,fillcolor=black](9,1){0.07}

\psline[linestyle=solid](5,1)(6,1)
\psline[linestyle=solid](8,1)(9,1)
\psline[linestyle=solid](7,3)(6,3)
\psline[linestyle=solid](8,4)(9,4)
\uput[45](1.8,0.3){$G$}
\uput[45](6.8,0.3){$G^*$}
\end{pspicture}
\caption{The graph $G^*$ associated to the graph G where each edge in $G$ is replaced  by a path of length $3$ in $G^*$
 and a pendant vertex is attached to each vertex of $G$.}\label{G^*}
\end{center} 
\end{figure}
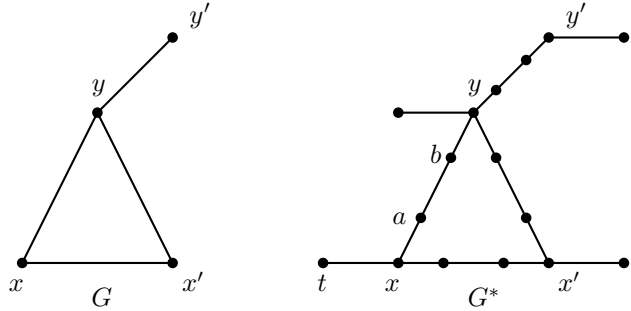
\section{Complexity results}\label{sec3}
This section is devoted to study the problem of complexity. Before we need to prove the following result which gives 
a necessary and sufficient condition for the graph $G^*$ (see Figure. \ref{G^*}) to have a $k-rlid$-coloring:
\begin{theorem} {\label{theoremG^*}} 
Let $k\geq 3$ be an integer and  $G=(V,E)$ be a connected twin-free graph.
Then  $G$ is $k$-colorable if and only if $G^{*}$ is $k$-rlid-colorable.
\end{theorem}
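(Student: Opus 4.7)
I would prove the equivalence by explicit translations in each direction.

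\emph{Forward direction.} Given a proper $k$-coloring $c$ of $G$ with palette $\{1,\dots,k\}$, I would build a $k$-rlid-coloring $c^*$ of $G^*$ as follows. For each edge $e=xy$ of $G$, denote by $m_e$ and $m'_e$ the two internal vertices of the $3$-path in $G^*$ replacing $e$, with $m_e$ adjacent to $x$ and $m'_e$ to $y$; and denote by $p_v$ the pendant attached to $v\in V(G)$. Set $c^*(v)=c(v)$ on $V(G)$, set $c^*(m_e)=c^*(m'_e)=1$ for every edge $e$, and for each $v$ choose $c^*(p_v)\in\{1,\dots,k\}\setminus\{1,c(v)\}$ (nonempty since $k\geq 3$). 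I would then verify the rlid-condition at each adjacent pair of $G^*$, which is of one of three types: $(v,p_v)$, $(v,m_e)$ with $e$ incident to $v$, or $(m_e,m'_e)$. Each check reduces to comparing two small color sets and exploiting that $c$ is proper, together with the fact that every $v\in V(G)$ has a neighbor in $G$ (so the color $1$ appears in $c^*(N[v])$); the case $|V(G)|=1$ is trivial since $G^*$ then consists of two twins and the rlid-condition is vacuous.

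\emph{Reverse direction.} Given a $k$-rlid-coloring $c^*$ of $G^*$, I would set $c(v)=c^*(v)$ for $v\in V(G)$. For any edge $e=xy$ of $G$, the vertices $m_e,m'_e$ are adjacent in $G^*$ with $N_{G^*}[m_e]=\{m_e,m'_e,x\}$ and $N_{G^*}[m'_e]=\{m_e,m'_e,y\}$, which are distinct since $x\neq y$; the rlid-condition then forces $\{c^*(m_e),c^*(m'_e),c^*(x)\}\neq\{c^*(m_e),c^*(m'_e),c^*(y)\}$, hence $c^*(x)\neq c^*(y)$. Thus $c$ is a proper $k$-coloring of $G$ using at most $k$ colors.

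\emph{Main obstacle.} The only delicate verification is the pair $(m_e,m'_e)$ in the forward direction: by symmetry these two middle vertices have neighborhoods that differ only by swapping $x$ with $y$, so whenever the pair of middle colors is a subset of $\{c(x),c(y)\}$ the two closed-neighborhood color sets coincide. Using a common third color for both middle vertices breaks this symmetry, and this is precisely where the hypothesis $k\geq 3$ enters (it is also what allows the pendant colors to avoid the forbidden two-element set $\{1,c(v)\}$). The same pair $(m_e,m'_e)$ is the driver of the converse implication, turning the rlid-condition into properness of $c$ on edges of $G$.
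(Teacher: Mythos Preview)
Your reverse direction is correct and matches the paper's argument exactly. The forward direction, however, has a genuine gap.

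The problem is your uniform choice $c^*(m_e)=c^*(m'_e)=1$ for \emph{every} edge $e$. Consider the pair $(v,p_v)$ when $c(v)=1$. Then
\[
c^*(N[v])=\{c(v)\}\cup\{c^*(p_v)\}\cup\{c^*(m_e):e\ni v\}=\{1,c^*(p_v)\},
\]
while $c^*(N[p_v])=\{c(v),c^*(p_v)\}=\{1,c^*(p_v)\}$. These two sets coincide, yet $v$ and $p_v$ are not twins in $G^*$ (since $v$ has a neighbor in $G$, some $m_e$ lies in $N[v]\setminus N[p_v]$). So the rlid-condition fails at this pair. Your parenthetical ``so the color $1$ appears in $c^*(N[v])$'' does not help here, because $1$ also appears in $c^*(N[p_v])$ via $c^*(v)=c(v)=1$.

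The paper avoids this by choosing the middle color \emph{edge by edge}: for the path $xaby$ replacing $xy$, it picks $q\notin\{c(x),c(y)\}$ and sets $c'(a)=c'(b)=q$; it also sets the pendant color of $x$ to $c(y)$ for some neighbor $y$ of $x$. Then $c'(N[x])\supseteq\{c(x),c(y),q\}$ has size at least $3$, whereas $|c'(N[t])|\le 2$ for every pendant $t$ and $|c'(N[a])|\le 2$ for every subdivision vertex $a$, so all edges incident to $x$ are handled by a cardinality argument. Your scheme can be repaired in the same way: replace the global color $1$ on the middle vertices by an edge-dependent color $q_e\notin\{c(x),c(y)\}$ (possible since $k\ge 3$); then $|c^*(N[v])|\ge 3$ for every $v\in V(G)$ and the $(v,p_v)$ and $(v,m_e)$ checks go through by comparing sizes. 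The $(m_e,m'_e)$ check, which you correctly identified as the one forcing properness in the converse, is unaffected.
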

\begin{proof}
\noindent  Let $k\geq 3$ be an integer and $G$ be a connected twin-free graph of order $n\geq 3$.
First, prove that if $G$ admits a $k$-coloring then $G^{*}$ is  $k$-rlid-colorable.\\ 
Let $c$ be a $k$-coloring of $G$ and  $G^*$ be the graph associated to $G$.\\
Define a coloring $c'$  of $G^*$ from $c$ as follows:\\
- $c'(x)=c(x)$ for any vertex $x\in V(G)$;\\
- for a pendant vertex $t$ adjacent to $x$, $c'(t)=c(y)$ where $y$ is any vertex of $G$  adjacent to $x$.\\
- for $xy\in E(G)$, let $xaby$ be the corresponding path in $G^*$. Since $k\geq 3$, choose a color $q\neq c(x)$ and 
$q\neq c(y)$. Fix $c'(a)=c'(b) =q$.\\ 
\noindent We claim that $c'$ is a $rlid$-coloring of $G^*$.\\
According to the previous notation, we only have to check edges $xt$ or $xa$ or $ab$. 
Remark that for $x\in G, \; \; y\in G$ with $xy\in E(G), \; \; xaby$ is the corresponding path in $G^*$ and $c'(t)=c(y)$.
We have $c'(N[x])$ contains $\{c(y),c(x),c'(a)\}$, therefore $|c'(N[x])|\geq 3$. Since $t$ is a leaf, we have $|c'(N[t])|\leq 2$.
Thereover by definition of $c'$, for any subdivision $x'a'b'y'$ we have $c'(a')=c'(b')$ then $|c'(N[a'])|\leq 2$.
Thus extremities of $xt$ or $xa$ edges are identified. 
In any path $xaby, \; \; c'(N[a])=\{c'(a), c(x)\}, \; \; c'(N[b])=\{c'(a), c(y)\}$. Since $c(x)\neq c(y)$ by definition 
of a coloring $c$ in $G$ and since $xy\in E(G)$, we obtain that $c'(N[a])\neq c'(N[b])$. 

Let $c$ be a $k-rlid$-coloring of $G^*$. To achieve the proof it is enough to show that $c(x)\neq c(y)$  
for any edge $xy\in E(G)$.
If $c(x)\neq c(y)$  then let $xaby$ be the  path of $G^*$ corresponding to edge $xy$ in $G$.
If  $c(x)= c(y)$  then $c(N[a])=\{c(a), c(x),c(b)\}=c(N[b])$, a contradiction.
\end{proof}

For  planar graphs, we have the following result\\

\begin{theorem}\label{complexite} \cite{Lichte}
 The problem of $3$-colorability of a planar graph is $NP$-complete.
\end{theorem}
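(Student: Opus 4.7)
The statement is the classical Lichtenstein--Garey--Johnson--Stockmeyer result, so my plan is to recall the standard reduction rather than attempt something new. The plan is to reduce from $3$-colorability of an arbitrary (possibly non-planar) graph, which is already known to be $NP$-complete via a reduction from $3$-SAT. Membership in $NP$ is immediate: a candidate $3$-coloring of a planar graph $G'$ is a certificate of size $O(|V(G')|)$ that is checked in linear time by verifying that every edge is bichromatic.

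For $NP$-hardness, the plan is as follows. First, given an arbitrary graph $G$, fix any drawing of $G$ in the plane (with arbitrarily many edge crossings, computable in polynomial time). Next, design a \emph{crossover gadget}: a small planar graph $H$ with four distinguished ``terminal'' vertices $n, s, e, w$ on its outer face such that, in every proper $3$-coloring of $H$, one has $c(n) = c(s)$ and $c(e) = c(w)$, and conversely every assignment of colors to the terminals satisfying these two equalities extends to a proper $3$-coloring of $H$. Finally, construct $G'$ from the drawing of $G$ by replacing every crossing point of two edges $uv$ and $xy$ by a copy of $H$, identifying the ends of the half-edges with the terminals $n, s, e, w$ respectively (chaining several copies of $H$ in series along each original edge if a single edge is involved in many crossings, so that the ``colour-propagation'' property of $H$ transports the colour of $u$ to that of $v$ along the replaced edge, and similarly for $xy$). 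The resulting graph $G'$ is planar by construction, has size polynomial in $|G|$, and by the defining property of $H$ admits a proper $3$-coloring if and only if $G$ does.

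The routine steps are checking planarity and the polynomial blow-up, so the only real content is the gadget $H$. The main obstacle will therefore be exhibiting a concrete planar $H$ and verifying the ``if and only if'' on its terminal colours. I would use the textbook gadget built from a handful of small wheels / triangle subgraphs: one can take a planar graph on roughly a dozen vertices, with $n, s, e, w$ on the outer face, and check by a short case analysis that (i) forcing $c(n)\neq c(s)$ or $c(e)\neq c(w)$ makes $H$ non-$3$-colorable, and (ii) any assignment with $c(n)=c(s)$ and $c(e)=c(w)$ extends. This case analysis, though finite, is the part that really needs to be done carefully; once the gadget is in hand, the rest of the reduction is essentially topological bookkeeping. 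Since the statement is quoted from \cite{Lichte}, I would cite that paper for the explicit gadget instead of redoing the enumeration.
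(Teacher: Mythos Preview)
The paper does not prove this theorem at all: it is stated with a citation to \cite{Lichte} and used as a black box to derive Corollary~\ref{2degenerate}. Your sketch is a correct outline of the classical crossover-gadget reduction from general $3$-colorability to the planar case, which is exactly how the cited result is obtained, so there is nothing to compare or correct.
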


\noindent Let $G$ be a planar graph, and  $G^*$ be the graph associated to $G$. Then $G^*$ is also planar. 
On the other hand, $G^*$ is $2$-degenerate (by construction).
By Theorem \ref{theoremG^*}, $G$ is $3$-colorable iff $G^*$ is $3-rlid$-coloring. By Theorem \ref{complexite}, 
the problem of deciding if a  planar graph $G$ is $3$-colorable is $NP$-complete, then we deduce that the problem 
of deciding that $G^*$ is $3-rlid$-coloring is also $NP$-complet. \\
Then, we obtain the following result: 

\begin{corollary}\label{2degenerate}
 The problem of deciding that a $2$-degenerate planar graph is $3-rlid$-coloring  is $NP$-Complete.\qed
\end{corollary}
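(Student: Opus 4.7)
The plan is to combine the two preceding results (Theorem \ref{theoremG^*} and Theorem \ref{complexite}) via a polynomial reduction, and all that remains to verify is that the construction $G \mapsto G^*$ produces a graph in the required class, namely $2$-degenerate and planar. The outline of the reduction is already spelled out in the paragraph preceding the corollary; I would just be careful to package it as a proof of $NP$-completeness, i.e. (a) membership in $NP$ and (b) $NP$-hardness via reduction.

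For membership, I would note that given a candidate coloring $c\colon V(G^*)\to \{1,2,3\}$ one can check in polynomial time, by running over all edges $uv$ of $G^*$, whether $c(N[u])=c(N[v])$; this verifies the $rlid$ property, so the decision problem lies in $NP$. For the hardness direction, I would reduce from planar $3$-colorability, which is $NP$-complete by Theorem \ref{complexite}. Given an instance $G$ of planar $3$-colorability, the reduction outputs $G^*$; this construction adds $3|E(G)|+|V(G)|$ vertices and is clearly polynomial. By Theorem \ref{theoremG^*} (applied with $k=3$), $G$ is $3$-colorable if and only if $G^*$ is $3$-$rlid$-colorable. Note that if $G$ has twins then $G^*$ does not, since the subdivision of every edge forces all originally-twin vertices to have disjoint closed neighborhoods in $G^*$, so the twin-free hypothesis of Theorem \ref{theoremG^*} causes no loss.

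The only genuine verifications left are the structural properties of $G^*$. Planarity is immediate: subdividing edges and attaching pendants in any face of a planar embedding of $G$ yields a planar embedding of $G^*$. For $2$-degeneracy I would argue by repeatedly exhibiting a vertex of degree at most $2$ in any subgraph $H\subseteq G^*$. The key observation is that the only vertices of $G^*$ of degree larger than $2$ are the original vertices of $G$ (each has degree $d_G(x)+1$), and these form an independent set in $G^*$ because every original edge $xy\in E(G)$ has been replaced by a path $xaby$. Thus if $H$ contains any pendant or any internal subdivision vertex $a$ or $b$, that vertex has degree at most $2$ in $H$; otherwise $H$ consists entirely of original vertices and is edgeless, so every vertex of $H$ has degree $0$.

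The main (and only) obstacle I anticipate is making sure the $2$-degeneracy verification is watertight, since this is the feature that strengthens the statement from "planar graphs" to "$2$-degenerate planar graphs"; the independent-set argument above handles it cleanly. Combining everything, $G^*$ lies in the class of $2$-degenerate planar graphs and the reduction $G\mapsto G^*$ transports $3$-colorability to $3$-$rlid$-colorability, completing the proof of $NP$-completeness.
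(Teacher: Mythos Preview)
Your proposal is correct and follows the same reduction the paper sketches in the paragraph preceding the corollary: reduce from planar $3$-colorability via $G\mapsto G^*$, invoke Theorem~\ref{theoremG^*}, and check that $G^*$ is planar and $2$-degenerate. You supply the details the paper omits (membership in $NP$, the explicit $2$-degeneracy argument via the independent set of high-degree vertices); one minor imprecision is that the twin-free hypothesis in Theorem~\ref{theoremG^*} is on $G$, not $G^*$, but this is harmless since that hypothesis is not actually used in the proof of the theorem (and, alternatively, planar $3$-colorability remains $NP$-complete when restricted to twin-free inputs).
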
 

In \cite{Parreau}, it was shown that:
\begin{theorem}\cite{Parreau}
For any fixed integer $g$, deciding whether a bipartite graph with girth at least $g$ and maximum degree $3$ 
is $3-lid$-colorable is an $NP$-complete problem.
\end{theorem}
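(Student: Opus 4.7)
The plan is to reduce from $3$-colorability of a suitable restricted graph class to $3$-lid-colorability of bipartite graphs with girth at least $g$ and maximum degree at most $3$. A natural starting point is $3$-colorability of planar graphs (or of $4$-regular planar graphs), which is $NP$-complete by \cite{Lichte}. The overall strategy mirrors the $G^*$ construction used in Theorem \ref{theoremG^*}, but with three additional complications to handle: parity of edge subdivisions (to guarantee bipartiteness), sufficient length of subdivisions (to guarantee girth), and a splitting gadget for vertices of degree larger than $3$.

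First, I would replace every edge $xy$ of the input graph $H$ by a subdivision path of length $\ell$, where $\ell$ is odd and chosen large enough (as a function of $g$) so that every cycle in the resulting graph has length at least $g$. Oddness of $\ell$ makes the subdivided graph bipartite. Next, for each vertex $v$ of $H$ whose degree exceeds $3$, I would replace $v$ by a tree gadget $T_v$ in which every internal vertex has degree $3$, and whose leaves are the attachment points for the subdivided edges originally incident to $v$. The gadget $T_v$ must have the property that, in any $3$-lid-coloring of the final graph $H^\sharp$, all its leaves receive the same color, so that $T_v$ faithfully simulates a single vertex of $H$. Finally, one attaches short pendant structures along each subdivision path so that every internal vertex has a locally asymmetric neighborhood, ensuring that the lid-condition can actually be satisfied, while respecting the degree and girth constraints.

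The correctness argument has two directions. For the easy direction, given a proper $3$-coloring $c$ of $H$, I would extend $c$ to $H^\sharp$ by coloring each gadget $T_v$ with color $c(v)$ propagated through the tree (using the remaining two colors in alternation on internal vertices, which is possible since $T_v$ is a tree of maximum degree $3$), and by coloring each subdivision path so that proper coloring and distinctness of closed-neighborhood color sets both hold; the odd length of the path makes this possible. For the hard direction, given a $3$-lid-coloring $c'$ of $H^\sharp$, I would first argue that every gadget $T_v$ forces a single common color on its leaves, then verify that this induced coloring $c$ of $V(H)$ is a proper $3$-coloring by analyzing, for each subdivision path of length $\ell$, how the lid-condition along the path forbids the endpoints from receiving the same color.

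The main obstacle, and the heart of the construction, is the design of the vertex-splitting gadget $T_v$ together with the auxiliary pendants on subdivision paths. One must simultaneously (i) force all attachment leaves of $T_v$ to share a color in every $3$-lid-coloring, (ii) keep every degree at most $3$, (iii) introduce no short cycles (so the pendants must themselves be long paths or trees of girth $\geq g$), and (iv) leave the gadget flexible enough to admit at least one $3$-lid-coloring whenever the corresponding vertex-color of $H$ is fixed. Balancing the rigidity needed for (i) against the flexibility needed for (iv), while keeping the girth large, is the delicate part; once a gadget with these properties is produced, the rest of the reduction is routine verification along the lines of Theorem \ref{theoremG^*}.
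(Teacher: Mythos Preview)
This theorem is quoted from \cite{Parreau} and the present paper gives no proof of it; it is merely cited to contrast with the polynomial result for bipartite graphs in the $rlid$ setting. There is therefore no proof in the paper to compare your proposal against.

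As for the proposal itself, it is only an outline, and the outline already contains a concrete error. You write that subdividing every edge into a path of odd length $\ell$ makes the resulting graph bipartite. With the convention used in this paper (``path of length $3$'' means three edges, as in the construction of $G^*$), a cycle of length $k$ in $H$ becomes a cycle of length $k\ell$ after subdivision; if $H$ is a planar graph containing a triangle and $\ell$ is odd, then $3\ell$ is odd and the subdivided graph is not bipartite. You need $\ell$ even (or you must start from a bipartite source problem), and then you must re-examine how the $lid$-constraint propagates along an even-length path, since the parity of the path governs whether the two endpoints are forced to have distinct colors.

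More seriously, your proposal defers the entire difficulty to the existence of a tree gadget $T_v$ satisfying properties (i)--(iv), and you do not construct it. A $3$-$lid$-coloring is a \emph{proper} coloring, so a tree of maximum degree $3$ admits many proper $3$-colorings; forcing all leaves to receive the same color while keeping degrees at most $3$, girth at least $g$, and still admitting at least one valid $3$-$lid$-coloring is far from automatic. Without an explicit gadget and a verification that it has these properties, the reduction is not a proof but a wish list. The actual argument in \cite{Parreau} proceeds via a different (and carefully engineered) gadget reduction; if you want to reconstruct it, you will need to produce the gadget explicitly and check both directions in full, not just assert that ``the rest is routine.''
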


Thanks to Corollary \ref{2degenerate}, $rlid$-coloring is still $NP$-complete for $2$-degenerate planar graphs.  
However, we will show that it is polynomially solvable for bipartite graphs.

\begin{theorem}{\label{bipartite}}
 Let $G$ be a bipartite graph of order at least $3$, then $\chi_{rlid}(G)\leq 3$.
\end{theorem}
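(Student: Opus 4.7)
The plan is to construct an explicit $3$-coloring of $G$ and to check the $rlid$ property directly. First, I would reduce to the case where $G$ is connected and twin-free: each connected component can be coloured independently, and components of order $1$ or $2$ impose no constraint on any $rlid$-coloring (the two vertices of a $K_{2}$-component are twins, so the only adjacent pair imposes no condition). By Theorem~\ref{premiere}, it then suffices to handle a connected, twin-free bipartite graph of order at least $3$.

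Let $(A,B)$ be the bipartition. The naive $2$-coloring $A\to 1$, $B\to 2$ already satisfies $c(N[u])=c(N[v])=\{1,2\}$ for every edge $uv$ with both endpoints of positive degree, so the whole task reduces to placing colour $3$ in a way that breaks this tie on every non-twin adjacent pair. Stars require a separate treatment: for $G=K_{1,n}$ with $n\geq 2$, colouring the centre with $1$, one leaf with $2$, and the remaining leaves with $3$ gives the centre signature $\{1,2,3\}$ and every leaf signature $\{1,2\}$ or $\{1,3\}$, distinguishing every non-twin centre--leaf pair (the leaves coloured $3$ are pairwise twins in $G$). For a non-star $G$, my plan is to run a BFS from a carefully chosen root $r$ and colour $r$ with $3$, every vertex of $N(r)$ with $1$, and all vertices at distance at least $2$ with $2$; a direct check shows that this colouring already works on many structured bipartite graphs (even cycles, complete bipartite graphs, $K_{3,3}$ minus a perfect matching, and more generally bipartite graphs of small diameter), because the third colour on $r$ propagates to the closed neighbourhoods of all vertices of $N(r)$, while vertices further away inherit two distinct local signatures from their mixing of colours $1$ and $2$.

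The main obstacle is handling bipartite graphs of larger diameter, where the above BFS-based scheme leaves long regions in which all closed neighbourhoods reduce to $\{2\}$ and consecutive signatures collide along a BFS branch. To remove these collisions I would perturb the colouring at depth: along every branch of the BFS tree, recolour a sparse set of vertices with colour $1$ or $3$ according to a periodic pattern designed so that for every edge $uv$, at least one endpoint has a neighbour whose colour lies in $c(N[u])\bigtriangleup c(N[v])$. The verification reduces to a short case analysis on the BFS distances of $u$ and $v$ from $r$ and on whether either endpoint is a pendant; in each case the careful placement of colour $3$ (on $r$, on pendants, and on the perturbation set at depth) provides the required distinguishing neighbour, showing that three colours always suffice.
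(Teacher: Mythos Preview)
Your outline follows the same high-level strategy as the paper: handle stars separately, then run a BFS from a root, colour the levels according to a periodic scheme, and verify the $rlid$ property by a case analysis on the BFS distances of the two endpoints of an edge. Your reduction to connected components and your treatment of $K_{1,n}$ are essentially identical to the paper's.

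The gap is in the main step. You describe the perturbation only by its intended effect (``a periodic pattern designed so that for every edge $uv$, at least one endpoint has a neighbour whose colour lies in $c(N[u])\bigtriangleup c(N[v])$''), but you never exhibit such a pattern, and the naive candidates do not work. On a long path your starting colouring $3,1,2,2,2,\dots$ gives consecutive signatures $\{2\}$, and a straightforward period-$4$ recolouring such as $3,1,2,2,3,1,2,2,\dots$ still produces adjacent vertices $v_4,v_5$ with $c(N[v_4])=c(N[v_5])=\{1,2,3\}$. The paper's construction relies on one ingredient you do not have: inside each BFS level $L_i$ one must separate the vertices that have a neighbour in $L_{i+1}$ (call them $B_i$) from those that do not ($A_i$), and colour these two sets differently on the odd levels. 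The explicit scheme is
\[
L_{4i}\mapsto 1,\quad A_{4i+1}\mapsto 1,\quad B_{4i+1}\mapsto 2,\quad L_{4i+2}\mapsto 3,\quad A_{4i+3}\mapsto 3,\quad B_{4i+3}\mapsto 2,
\]
after which $c(N[v])$ depends only on which of these six classes $v$ lies in, and the edge check collapses to a finite table. Your phrase ``along every branch of the BFS tree'' is also imprecise for a general bipartite graph, where vertices may have several predecessors; the correct bookkeeping is by level together with the $A/B$ distinction, not by tree branches. In short, the plan is right, but the decisive construction --- the $A_i/B_i$ split and the specific period-$4$ pattern --- is missing.
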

\begin{proof}
\noindent Let $G$ be a bipartite graph.
We have two cases: $G$ contains a universal vertex or no.\\
- \textbf{$G$ contains a universal vertex}.\\
Then $G\backsimeq K_{1,p}$  with $p>1$ then: \\
$\bullet$ If $p=2$. Since $K_{1,2} \simeq P_3$, one may assume that  $\chi_{rlid} (K_{1,2})\leq 3$.\\
$\bullet$ If $p>2$: Let us prove that $\chi_{rlid} (K_{1,p}) \leq 3$.
Assume that $u$ is the universal vertex and the other vertices are denoted by  $\{v_1, v_2, \dots, v_p\}$.
The coloring $c$ defined by $c(u)=1, \; \; c(v_1)=2$ and $c(v_i)=3$ for all $i=2, \dots, p$ is a $rlid$-coloring.\\
- \textbf{$G$ does not contain an universal vertex}. \\
Let $x$ be a vertex of $G$.
Considering the partition of vertices of $G$ into levels $L_0, L_1,...L_p$ according to the vertex $x$, 
we have $L_0=\{x\}$ and $L_j= \{v/ d(x,v)=j\}$ for $j\geqslant 1$. 
Since $G$ is a finite graph, we have a finite number of levels.
Let $A_i$ and $B_i$ be a subdivision of $L_i$ in two disjoint subsets such that $v\in A_i$ iff $N(v)\cap L_{i+1} =\emptyset$ 
and $N(v)\cap L_{i-1}=B_{i-1}$, and $v\in B_i$ iff  $N(v)\cap L_{i+1} =L_{i+1}$ and 
and $N(v)\cap L_{i-1}=B_{i-1}$.\\
Let $v$ be a vertex of $G$ ($v\neq x$). Consider the coloring $c$ of $G$ with three colors $\{1,2,3\}$ as follows, for all $i\geqslant 0$:\\
\begin{center}
$c(v)=$
$\begin{cases}
{1} &  if \; \;  v\in L_{4i} \\
{1} &  if  \; \; v\in A_{4i+1}\\
{2} &  if  \; \; v\in  B_{4i+1}\\
{3} &  if \; \; v\in L_{4i+2} \\
{3} &  if  \; \; v\in  A_{4i+3}\\
{2} &  if  \; \; v\in B_{4i+3}
\end{cases}$
\end{center}
Since $G$ is a bipartite graph, then we can not have edges between vertices in a same level. Observe that $L_2$ is not 
empty.\\
For all $i\geqslant 0$, remark that for $v\in L_{4i}, \; \; c(N[v])=\{1,2\}$.\\
When $v\in A_{4i+1}$, we get $c(N[v])=\{1\}$, if $v\in B_{4i+1}$ we have $c(N[v])=\{1,2,3\}$.\\
If $v\in L_{4i+2}$, we obtain $c(N[v])=\{2,3\}$.
Finally in $L_{4i+3}$, if $v\in A_{4i+3}$ we get $c(N[v])=\{3\}$, if $v\in B_{4i+3}$ we obtain $c(N[v])=\{1,2,3\}$.\\
Let $u$ and $v$ be two adjacent vertices. Since $G$ is a bipartite graph, then  $u$ and $v$ do not belong to a same level.
Suppose that $u\in L_i$, then we have either $v\in L_{i-1}$ or $v\in L_{i+1}$ and suppose that $N[u]\neq N[v]$.
We have to consider four cases:\\

\noindent  $\bullet$ \textbf{Case $1. \; \; u\in L_{4i}$}\\
If $u\in L_{4i}$ and  $v\in B_{4i-1}$, we have $c(N[u])\bigtriangleup c(N[v])=\{3\}$.\\
If $u\in B_{4i}$, then $v\in L_{4i+1}$ and $c(N[u])\bigtriangleup c(N[v])=\{2 \; \; or\; \; 3\}$.\\
$\bullet$ \textbf{Case $2. \; \; u\in L_{4i+1}$}\\
If $u\in A_{4i+1}$, if $v\in B_{4i}$ we have $c(N[u])\bigtriangleup c(N[v])=\{2\}$.\\
If $u\in B_{4i+1}$, then either $v\in B_{4i}$ and $c(N[u])\bigtriangleup c(N[v])=\{3\}$ 
or $v\in L_{4i+2}$ and we get then $c(N[u])\bigtriangleup c(N[v])=\{1\}$ \\
$\bullet$ \textbf{Case $3. \; \; u\in L_{4i+2}$}\\
If $u\in L_{4i+2}$ and  $v\in B_{4i+1}$, we obtain  $c(N[u])\bigtriangleup c(N[v])=\{1\}$.\\
If $u\in B_{4i+2}$, then either $v\in A_{4i+3}$ and $c(N[u])\bigtriangleup c(N[v])=\{2\}$
or $v\in B_{4i+3}$ and $c(N[u])\bigtriangleup c(N[v])=\{1\}$.\\
$\bullet$ \textbf{Case $4. \; \; u\in L_{4i+3}$}\\
If $u\in A_{4i+3}$, if we have $v\in B_{4i+2}$ then $c(N[u])\bigtriangleup c(N[v])=\{2\}$.\\
If $u\in B_{4i+3}$, then either $v\in B_{4i+2}$ and $c(N[u])\bigtriangleup c(N[v])=\{1\}$
or $v\in L_{4(i+1)}$ and $c(N[u])\bigtriangleup c(N[v])=\{3\}$.\\
In all case, observe that $u$ and $v$ are distinguished. Then we have  $\chi_{rlid}(G)\leq 3$.
\end{proof}\\

Remark that there is no graph $G$  with $\chi_{rlid}(G)=2$. Moreover, a graph is $1-rlid$-coloring iff it is 
the disjoint union of cliques. Therefore, our proof of Theorem \ref{bipartite} provides a polynomial time algorithm 
for $3-rlid$-coloring graph if it is bipartite.

\section{Relationship between $\gamma_{id}, \; \; \chi_{id}$ and $\chi_{rlid}$} \label{sec4}

The notion of \textit{identifying chromatic number} $\chi_{id}$ and \textit{locally identifying chromatic number} $\chi_{lid}$ were 
given for twin-free graphs, by as against for the \textit{relaxed locally identifying chromatic number}, 
 we even study  the graphs 
that contain twins. In the following , we show  that the lower bound of inequality of Theorem \ref{premiere} is tight.
\begin{pr}
 Let  $p\geq 4$ be an integer and $t=\binom{p-1}{2}$
There is a graph $G$ such that $\chi_{rlid}(G)= p$ 
and  $\chi_{rlid}(G)=\chi_{rlid}(G/ \mathcal{R})-t$.
\end{pr}
\begin{proof}\\
Construct the graph $G$ such that $\chi_{rlid}(G)= p$ and $t=\binom{p-1}{2}$. Consider $K_{p+t}$ with 
$\{x_1, x_2, \dots,x_t, \dots, x_{p+t}\}$   the set of its vertices,   ${K^*_{p+t}}$  is the graph associated to 
 $K_{p+t}$ as defined in Section \ref{sec2}. Denote by   $z_i$  the pendant vertex of $x_i$ for $i=1,\dots,  p+t$ and
  $x_{i}^{j}$  and $x_{j}^{i}$ are the vertices subdivide the edge $x_ix_j$  with  $x_{i}^{j}$ (resp.  $x_{j}^{i}$) 
is adjacent to $x_i$ (resp. to $x_j$) for $i,j =1, \dots, p+t$.
The graph $G$ is obtained from $K^*_{p+t}$ by adding $t$ twins $y_{1}, y_{2}, \dots, y_{t}$ respectively 
to $x_{1}, x_{2}, \dots, x_{t}$. \\
First, observe that $G / \mathcal{R} \backsimeq K^*_{p+t}$. 
Then $\chi_{rlid}(G/ \mathcal{R}) =p+t$.\\
Now, we will prove that $G$ admits a $p-rlid$-coloring.\\
Let $c$ be a coloring of $G$ defined as follows: 
put $c(x_{t+i})=i$ for $i\geq 1$.  For the vertices belonging to the same equivalence-class, $c$ is defined by 
one to one  mapping $\{(c(x_{i}), c(y_{i})) \mid \;  1\leq i\leq t\}\longmapsto \mathcal P{_2}(p-1)$.  
The vertices $x_{i}^{j}$ and $x_{j}^{i}$ receive the same color $p$  for all $(i,j)$ except  $(p, t+1)$ and 
we put $c(x_{p}^{t+1})=c(x_{t+1}^{p})=p-1$.  
For the leaf $z_i$ adjacent to $x_i$, 
put $c(z_i)=1$ if $i>t$ and $c(x_i)\neq 1$ and if $c(x_i)=1,\; \;  z_i$ receives the color $2$. 
Put $c(z_i) \in \{1, \dots, p-1\}\setminus \{c(x_i), c(y_i)\}$ 
if $i\leq t$.\\
Let $u$ and $v$ be two adjacent vertices  of $G$ (we are not interested to distinguish two vertices belonging to 
a same equivalence-class).\\
Observe that if  $u=x_i^j$ and $v=x_j^i$ with $1\leq i, j\leq t$, we have $c(z_i)\in c(N[x_i^j])$ and 
$c(z_i)\notin c(N[x_j^i])$.
If  $t+1\leq i, j\leq p+t$, we have $c(x_i)=i\in c(N[x_i^j]), \; \; c(x_j)=j\notin c(N[x_i^j])$ and $c(x_i)\neq c(x_j)$.
If $1\leq i\leq t$ and $t+1\leq j\leq p$, remark that $\mid c(N[x_j^i])\mid = 3$ and  $\mid c(N[x_i^j])\mid = 2$.
If $u=x_i$ and $v=x_i^j$, then $c(z_i)\in c(N[x_i])$ with $c(z_i)\notin c(N[x_i^j])$ if $1\leq i\leq t$  and   
$\mid c(N[x_j^i])\mid = 2$ 
and     $\mid c(N[x_i])\mid = 4$ if $t+1\leq i\leq p+t$.\\
If $u=x_i$ and $v=z_i$, then if $1\leq i\leq t$, we get  $\mid c(N[x_i])\mid = 4$ and  $\mid c(N[z_i])\mid = 3$.
If    $t+1\leq i\leq t+p$, we have   $\mid c(N[x_i])\mid = 3$ and $\mid c(N[z_i])\mid = 2$.\\
Then, each two adjacent vertices of $G$ are distinguished by $c$.
\end{proof}\\

Theorem \ref{bipartite} claims that bipartite graphs admit $3-rlid$-coloring despite $\chi_{id}$ of bipartite graph 
is not bounded. Howeover in \cite{Parreau}, it is shown the following result:
\begin{theorem}\label{parreau}\cite{Parreau}
Let $G$ be a free-twin graph. Then $\chi_{id} (G) \leq \gamma_{id} (G) +1$.
\end{theorem}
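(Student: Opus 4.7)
The plan is to build an identifying coloring directly from a minimum identifying code. Let $C$ be an identifying code of $G$ with $|C|=\gamma_{id}(G)$, and enumerate its elements as $C=\{w_1,w_2,\dots,w_{\gamma_{id}(G)}\}$. I would define a coloring $c:V(G)\to\{1,2,\dots,\gamma_{id}(G)+1\}$ by setting $c(w_i)=i$ for each $i$, and $c(v)=\gamma_{id}(G)+1$ for every $v\in V(G)\setminus C$. The idea is that the $\gamma_{id}(G)$ ``code colors'' are pairwise distinct and each is used on exactly one vertex, so that reading off which of these colors appear in a closed neighborhood recovers exactly which code vertices lie in that neighborhood.

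The key step is then to verify property $(\mathcal{P})$ for any pair of distinct vertices $u,v$. I would argue as follows: because $c$ assigns distinct colors to the elements of $C$ and one further ``spare'' color to all vertices outside $C$, the set $c(N[u])\cap\{1,\dots,\gamma_{id}(G)\}$ is precisely $\{c(w)\mid w\in N[u]\cap C\}$, and by injectivity of $c$ on $C$, this set is in canonical bijection with $N[u]\cap C$. The same holds for $v$. Since $C$ is an identifying code, $N[u]\cap C\neq N[v]\cap C$, so the corresponding restricted color sets differ, and therefore $c(N[u])\neq c(N[v])$. Because $G$ is twin-free this separation is required and achievable for every pair, so $c$ is an identifying coloring using at most $\gamma_{id}(G)+1$ colors, giving $\chi_{id}(G)\le\gamma_{id}(G)+1$.

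I do not anticipate a genuine obstacle here; the whole argument is a coding trick turning a separating set into a separating palette. The only point that deserves care is to make sure that the extra color assigned to $V(G)\setminus C$ cannot destroy the distinguishing power of $C$: one checks that although the spare color may appear in both $c(N[u])$ and $c(N[v])$, the disagreement on the first $\gamma_{id}(G)$ colors already forces $c(N[u])\neq c(N[v])$, so the spare color is harmless. The twin-freeness assumption is used implicitly through the mere existence of an identifying code $C$ (without it, $\gamma_{id}(G)$ is undefined), so no additional work is needed on that front.
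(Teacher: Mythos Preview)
The paper does not actually prove this theorem; it is quoted as a known result from Parreau's thesis \cite{Aline} and used only to deduce Property~\ref{proper1}. Your argument is correct and is precisely the standard one: give each vertex of a minimum identifying code its own color and all remaining vertices one extra color, then use injectivity on the code to transfer the separating property of $C$ to the color sets. There is nothing to compare against in the present paper, but your proof is exactly what one would expect to find in the cited source.
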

For $rlid$-coloring, this gives:
\begin{pr}\label{proper1}
For any twin-free graph $G$, we have $\chi_{rlid} (G)  \leq \gamma_{id} (G) +1$.\qed
\end{pr}

Our goal is to use this inequality to characterize graphs $G$  satisfying $\chi_{rlid}(G)=n$.
A characterization of  graphs for which the $id$-chromatic number equals the order of $G$ is
given in \cite{Aline}.
\begin{theorem}\cite{Aline}\label{chi(id)}
Given a connected twin-free graph $G$, we have $\chi_{id}(G)=n$ if and only if $G$ is a complete graph minus 
maximal matching or $G= K_1 \Join {\mathcal{H}}$ where ${\mathcal H}={\mathcal H {_1}}\bowtie \dots \bowtie{\mathcal H {_l}}$
 with ${\mathcal H_{i}} \backsimeq {\textrm{P}^{k-1}_{2k}}$  or ${\mathcal H{_i}}= \overline{K_2}$ 
for $i=1, \dots,l, \; \; k\geq 2$.
\end{theorem}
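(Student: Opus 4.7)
The plan is to prove the equivalence in two directions. For the sufficiency direction (the two listed families have $\chi_{id} = n$), I would argue that any coloring assigning the same color to two distinct vertices $u,v$ produces a pair whose closed-neighborhood color sets coincide, contradicting identifiability. In $G = K_n - M$ every vertex has at most one non-neighbor (its matching partner). If $c(u)=c(v)$ with $u,v$ matched, then $c(N[u])=c(N[v])$ holds directly, since the two closed neighborhoods differ only in $\{u\}$ versus $\{v\}$ and these carry the same color; if $u,v$ are not matched to each other, their respective partners $u',v'$ satisfy $c(N[u'])=c(N[v'])=c(V)$, because the only color otherwise missing from each closed neighborhood is recovered on the twin-colored vertex. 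The same scheme handles $G = K_1 \Join \mathcal{H}$: the universal $K_1$-vertex guarantees that all non-edges live inside some factor $\mathcal{H}_i$; the $\overline{K_2}$ factors reduce to the matching argument above, while the $\textrm{P}^{k-1}_{2k}$ factors yield the same type of collision once one exploits the interval-type non-adjacency pattern of the path power (vertices $i,j$ with $|i-j|\geq k$).

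For the necessity direction, the natural tool is the bound $\chi_{id}(G) \leq \gamma_{id}(G)+1$ of Theorem \ref{parreau}: the hypothesis $\chi_{id}(G) = n$ immediately forces $\gamma_{id}(G) \geq n-1$, i.e.\ at most one vertex may be omitted from some minimum identifying code. Graphs satisfying this extremal bound are very restricted, and classifying them should yield exactly the two families stated. An alternative route is to argue directly in the complement graph $\overline{G}$: since a closed neighborhood in $G$ is the complement of an open neighborhood in $\overline{G}$, the condition $c(N_G[u])=c(N_G[v])$ translates into a condition on which color classes are entirely contained in $N_{\overline{G}}(u)$ versus $N_{\overline{G}}(v)$. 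One would then show that each connected component of $\overline{G}$ must be either a single edge (producing a $\overline{K_2}$ factor after complementation) or the complement of a path power $\textrm{P}^{k-1}_{2k}$, with at most one isolated vertex in $\overline{G}$ (which becomes the universal $K_1$ in $G$).

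The hardest part will be this necessity direction, and specifically ruling out every other component structure in $\overline{G}$. For each forbidden configuration I would need to exhibit a concrete identifying coloring using at most $n-1$ colors, which amounts to finding two vertices whose non-neighborhoods are "symmetric enough" to let them share a color without creating a pair $x,y$ with $c(N[x])=c(N[y])$. The $\textrm{P}^{k-1}_{2k}$ case is the most delicate, because these graphs sit exactly on the boundary of what can and cannot be color-reduced: a slightly larger or slightly different interval-type non-edge pattern would admit a valid merging of two color classes. I would attempt this by induction on the size of a component of $\overline{G}$, combined with a careful case analysis on its non-edge shape, checking that any deviation from the two permitted structures opens up room to merge two color classes while preserving the identification property.
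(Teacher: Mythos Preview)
The paper does not prove this theorem at all: it is quoted verbatim from Parreau's thesis \cite{Aline} and then used as a black box in the proof of Corollary~\ref{caract}. There is therefore no ``paper's own proof'' to compare your proposal against.

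As for the proposal itself, what you have written is a reasonable high-level plan but not a proof. The sufficiency sketch for $K_n - M$ is essentially correct. The sufficiency sketch for the $\textrm{P}^{k-1}_{2k}$ factors is not: you only say one should ``exploit the interval-type non-adjacency pattern'' without carrying out the argument, and this is precisely the place where the work lies (compare the explicit symmetric-difference computations the paper does in the proof of Corollary~\ref{caract} for the $rlid$ analogue). For the necessity direction, your first route via $\chi_{id}(G)\le \gamma_{id}(G)+1$ is the right instinct, but the claim that ``classifying graphs with $\gamma_{id}(G)\ge n-1$ should yield exactly the two families stated'' is doing all the heavy lifting and is itself a nontrivial theorem (the characterization of graphs with identifying code number $n-1$). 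Your alternative complement-graph route is plausible in outline, but the final paragraph openly admits that the crucial step---ruling out every other component shape in $\overline{G}$---is left as a case analysis you have not performed. In short, both directions of your plan defer the actual difficulty to a step you have not attempted; if you want to supply a self-contained proof you will need to either reproduce the $\gamma_{id}=n-1$ classification or carry out the complement-component analysis in full.
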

For a relaxed locally identifying coloring we obtain:
\begin{corollary}\label{caract}
 Let $G$ be a  connected twin-free graph of order $n$. Then,\\
 $\chi_{rlid} (G)=n$ if and only if  $G\simeq K_1 \Join {\mathcal H}$ with 
${\mathcal H}= {\mathcal H{_1}}\Join {\mathcal H{_2}}\Join \dots \Join {\mathcal H{_l}}$  and 
${\mathcal H{_i}}\backsimeq {\textrm{P}^{k-1}_{2k}}$  or ${\mathcal H{_i}}= \overline{K_2}$ for $i=1, \dots, l$.
\end{corollary}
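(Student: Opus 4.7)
The plan is to combine Theorem~\ref{chi(id)} with the trivial inequality $\chi_{rlid}(G)\leq \chi_{id}(G)\leq n$ (the upper bound coming from the all-distinct coloring, which is already an $id$-coloring since $G$ is twin-free), and then to analyze each direction by structural arguments.

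For $(\Rightarrow)$, the hypothesis $\chi_{rlid}(G)=n$ forces $\chi_{id}(G)=n$, so Theorem~\ref{chi(id)} leaves only two possibilities: either $G$ is $K_n$ minus a maximal matching, or $G\simeq K_1\Join \mathcal{H}$ of the prescribed form. I would first observe that $K_{2k+1}$ minus a maximal matching is already of the form $K_1\Join(\overline{K_2})^{\Join k}$ (the unmatched vertex playing the role of $K_1$), and that $K_2$ minus its edge is disconnected. The remaining sub-case is $G=K_{2k}-M$ with $k\geq 2$ and $M$ a perfect matching; for such graphs I would exhibit an explicit $rlid$-coloring on $n-1$ colors: assign one matched pair $(u_1,v_1)$ a single shared color and pairwise distinct new colors to all other vertices. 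For any two adjacent (i.e.\ non-matched) vertices $x,y$, the matches $\mathrm{match}(x)$ and $\mathrm{match}(y)$ are distinct, and cannot both lie in $\{u_1,v_1\}$ (otherwise $\{x,y\}=\{u_1,v_1\}$ would itself be the matched pair), hence at least one of $c(\mathrm{match}(x)),c(\mathrm{match}(y))$ is uniquely colored, yielding $c(V\setminus\{\mathrm{match}(x)\})\neq c(V\setminus\{\mathrm{match}(y)\})$, i.e.\ $c(N[x])\neq c(N[y])$. This gives $\chi_{rlid}(G)\leq n-1$, contradicting the assumption and eliminating this case.

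For $(\Leftarrow)$, let $G=K_1\Join(\mathcal{H}_1\Join\cdots\Join \mathcal{H}_l)$ with each $\mathcal{H}_i\simeq \textrm{P}^{k-1}_{2k}$ or $\overline{K_2}$, let $v_0$ denote the $K_1$-vertex, and let $c$ be any $rlid$-coloring of $G$. I want to show that $c$ uses $n$ distinct colors. Since $v_0$ is universal, $c(N[v_0])=c(V)$, and for every $x\neq v_0$ the adjacency $v_0x$ forces the existence of a color used only by non-neighbors of $x$; all such non-neighbors lie inside the component $\mathcal{H}_i$ containing $x$, so this gives a local constraint in $\mathcal{H}_i$. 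For $\mathcal{H}_i\simeq\overline{K_2}=\{a,b\}$ the only non-neighbor of $a$ is $b$, forcing $c(b)$ to be unique in $V$, and symmetrically $c(a)$ is unique. For $\mathcal{H}_i\simeq \textrm{P}^{k-1}_{2k}$, the vertices $v_k$ and $v_{k+1}$ each have a single non-neighbor in $\mathcal{H}_i$ (namely $v_{2k}$ and $v_1$), so $c(v_{2k})$ and $c(v_1)$ are unique in $V$. I would then propagate the uniqueness outward by induction: writing $U_x:=c(V)\setminus c(N[x])$, the $rlid$-constraint on the adjacent pair $(v_a,v_{a+1})$ (applied for $a=k-1,k-2,\ldots,1$ on one side and symmetrically for $a=k+1,\ldots,2k-1$ on the other) precisely requires $c(v_{a+k})$ to be carried only by vertices in $\{v_{a+k},\ldots,v_{2k}\}$; combined with the already-proved uniqueness of $c(v_{a+k+1}),\ldots,c(v_{2k})$, this forces $c(v_{a+k})$ itself to be unique in $V$. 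Iterating on both sides exhausts the component. Finally $c(v_0)$ cannot coincide with any $c(x)$, $x\neq v_0$, since this would destroy the established uniqueness of $c(x)$. Hence $c$ uses $n$ pairwise distinct colors, so $\chi_{rlid}(G)\geq n$, and together with $\chi_{rlid}(G)\leq n$ equality follows.

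The main obstacle is the inductive uniqueness-propagation inside a $\textrm{P}^{k-1}_{2k}$-component: one must verify carefully that the $rlid$-constraint on each adjacent pair $(v_a,v_{a+1})$ translates into exactly the statement that $c(v_{a+k})$ is confined to a prescribed tail of the path, and keep careful track of the nested, partially overlapping non-neighborhoods $\{v_l:|l-j|\geq k\}$ as well as the base cases at the endpoints $v_k$ and $v_{k+1}$.
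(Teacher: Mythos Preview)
Your proof is correct and, for the $(\Leftarrow)$ direction, follows essentially the same line as the paper: both exploit that for consecutive vertices $v_s,v_{s+1}$ in a $\textrm{P}^{k-1}_{2k}$-block one has $N[v_s]\bigtriangleup N[v_{s+1}]=\{v_{s\pm k}\}$, and propagate the resulting uniqueness constraints across the block (and across blocks via the universal vertex). Your phrasing via the tail sets $\{v_{a+k},\dots,v_{2k}\}$ is a clean way to organise this induction.

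For the $(\Rightarrow)$ direction you are actually more careful than the paper. The paper simply notes that $\chi_{rlid}(G)=n$ forces $\chi_{id}(G)=n$ and invokes Theorem~\ref{chi(id)}, without explicitly disposing of the alternative ``$K_n$ minus a maximal matching'' supplied by that theorem. You handle this properly: you observe that for odd $n$ this graph is already $K_1\Join(\overline{K_2})^{\Join k}$, and for even $n=2k\ge 4$ you exhibit an explicit $(n-1)$-colour $rlid$-coloring of $K_{2k}-M$, so that case cannot occur under the hypothesis $\chi_{rlid}(G)=n$. This extra step is genuinely needed, and your argument for it (at least one of $\mathrm{match}(x),\mathrm{match}(y)$ carries a unique colour whenever $xy$ is an edge) is correct.
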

\begin{proof}
 Let $G$ be a connected twin-free graph of order $n\geq 3$.\\
\underline{The proof of \textbf{if part}}.\\
\textbf{If $G\simeq K_1 \Join \mathcal H$} where  
${\mathcal H}= {\mathcal H{_1}}\Join {\mathcal H{_2}}\Join \dots \Join {\mathcal H{_l}}$
 with $\mathcal H{_i}\simeq {\textrm{P}^{k-1}_{2k}}$ or ${\mathcal H{_i}}= \overline{K_2}$ 
and $i=1, \dots, l$,  
let $u$ be the universal vertex corresponding to $K_1$ and  $\{v_1,v_2,\dots,v_{2k}\}$ the set of vertices of 
$\mathcal H{_1}$ for which $\mathcal H{_i}\backsimeq \textrm{P}^{k-1}_{2p}$. Let $c$ be an $rlid$-coloring of $G$.\\
For all $s$ such that $2k>s>k$, we have $N[v_s]\bigtriangleup N[v_{s+1}]=\{v_{s-k}\}$. 
Hence for all $j,t\leq k, \; \; c(v_j)\neq c(v_t)$. Moreover, for all $s>k$, we have  
$N[v_s]\supseteq \{v_{k+1}, \dots, v_{2k},u\}\cup \displaystyle \bigcup_{t\geq 2} V(\mathcal H{_t})$.\\
Hence $\forall j\leq k$, 
we have $c(v_j)\notin c(\{v_{k+1}, \dots, v_{2k}, u\} \cup \displaystyle \bigcup_{t\geq 2} V(\mathcal H{_t}))$. 
Otherwise, $c(N[v_{j+k-1}])=c(N[v_{j+k}])$.\\
By symmetry, for any $j>k$,  we have  $c(v_j)\notin c(V\setminus v_j)$.\\
Similarly, we obtain the same results for each $\mathcal H{_j}$ with $j\geq 2$.\\
\textbf{If ${\mathcal H{_1}} \simeq \overline{K_2}$}, let $a$ and $b$ be the vertices corresponding to $\overline{K_2}$.
We get $\{b\}=N[a]\bigtriangleup N[u]$. Moreover $N[a]= V\setminus \{b\}$, then $b$ has a color different  
to all colors of other vertices of  graph. We obtain a similar result for the vertex $a$.\\
Finally all vertices of $G$ have different colors. \\

\underline{The proof of \textbf{only if part}}.\\
If $\chi_{rlid}(G)=n$ then $\chi_{rlid}$-coloring is $\chi_{id}$-coloring and by Theorem \ref{chi(id)} \cite{Aline} 
we have the result.
 \end{proof}\\
 
Corollary \ref{caract} shows that $\chi_{rlid}(G)=\chi_{id}(G)$ when $\chi_{rlid}(G)=n$ 
then  we also have $\chi_{rlid}(G)= \chi_{lid}(G)$.\\
Now, we can show that $\chi_{rlid}$ may be very small compared with $\chi_{lid}$ , for this we will show the lower bound of 
Theorem \ref{main}. We will exhibit a family of graph for which this bound is tight.  
\begin{lem}\label{important}
Let $G$ be a graph and $G/ \mathcal{R}$ be the maximal free-twin subgraph of $G$. Then $\chi_{rlid}(G)\geq \log(\omega(G/ \mathcal{R}))+1$.
\end{lem}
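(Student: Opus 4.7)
The plan is a direct counting argument using the fact that, inside a maximum clique of $G/\mathcal{R}$, every pair of (non-twin) adjacent vertices must receive different color-sets in their closed neighborhoods, while all these color-sets are forced to share a common core.

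First I would fix a $rlid$-coloring $c$ of $G$ using $k = \chi_{rlid}(G)$ colors, and pick a clique $K = \{\bar v_1, \dots, \bar v_\omega\}$ of $G/\mathcal{R}$ of maximum size $\omega := \omega(G/\mathcal{R})$. For each $i$, choose a representative $v_i \in V(G)$ of the equivalence class $\bar v_i$. Since $\bar v_i \bar v_j$ is an edge of $G/\mathcal{R}$, the definition of the quotient gives $v_iv_j \in E(G)$, so $\{v_1,\dots,v_\omega\}$ is a clique of $G$; and since $G/\mathcal{R}$ is twin-free, the classes $\bar v_i$ and $\bar v_j$ are distinct, which means $N_G[v_i]\neq N_G[v_j]$, i.e.\ the $v_i$'s are pairwise non-twin in $G$.

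Next I would exploit the $rlid$-condition. For any $i\neq j$, $v_i$ and $v_j$ are adjacent and non-twin, so the sets $S_i := c(N_G[v_i])$ for $i=1,\dots,\omega$ are pairwise distinct subsets of $[k]$. The key observation is that each $S_i$ contains the common set $C := \{c(v_1),\dots,c(v_\omega)\}$, because all the $v_j$'s lie in $N_G[v_i]$ (as $\{v_1,\dots,v_\omega\}$ is a clique). Hence the $S_i$'s are $\omega$ distinct supersets of $C$ inside $[k]$, so
\[
\omega \;\leq\; 2^{\,k - |C|}.
\]
Since $C$ is non-empty (it contains $c(v_1)$), $|C|\geq 1$, and we get $k \geq |C| + \log_2 \omega \geq \log_2 \omega + 1$, which is exactly the desired inequality.

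I do not expect a real obstacle: the only point requiring care is the quotient-graph step, ensuring that the representatives can be selected so as to inherit a clique of the same size in $G$ and that they remain pairwise non-twin, both of which follow immediately from how $G/\mathcal{R}$ is defined. The rest is just observing that the common core $C$ of colors forced by the clique pushes up the number of colors needed to separate the $\omega$ closed neighborhoods by an additional $\log_2\omega$.
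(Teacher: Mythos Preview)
Your argument is correct and is essentially the same as the paper's: both pick a maximum clique of $G/\mathcal{R}$, note that the colors of the clique vertices form a common core $C=c(K)$ contained in every $c(N[v_i])$, and then count that the $\omega$ distinct closed-neighborhood color sets force at least $\log_2\omega$ further colors outside $C$, giving $k\ge |C|+\log_2\omega\ge \log_2\omega+1$. The only (minor) difference is that you are explicit about lifting the clique of $G/\mathcal{R}$ to pairwise non-twin representatives in $G$, a step the paper leaves implicit.
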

\begin{proof}
Let $K$ an $\omega$-clique of $G/ \mathcal{R}$. Let  $V'=V(G)\setminus K$ and  $c$ be a $rlid$-coloring of  $G$.\\
Since for each pair of distinct adjacent vertices  $x, y$ in $K$, we must have $c(N[x])\neq c(N[y])$ 
(because $c$ is  an $rlid$-coloring), 
we also have for each vertex $x\in K$, $c(K)$ belongs to $c(N[x])$, since $x$ has in its  neighborhoods all vertices of $K$
 and some vertices of $V'$.\\
Thus, for each two distinct adjacent vertices $x, y \in K$,\\ we have $c(N[x])\neq c(N[y])$  iff 
$c(N[x])-c(x)\neq c(N[y])-c(y)$.

Let  $p=|c(V)\setminus (c(K)|$. With $p$ distinct colors, we can distinguish at most $2^p$ vertices.
Thus  $2^p\geq |K|$. Hence $p\geq \log(|K|)$. Then, the total number of different colors used by $c$ is at least $|c(K)|+p\geq p+1$.
 \end{proof}

Now we exhibit a family of graphs satisfying  the equality:
\begin{corollary}\label{Hp}
If $p\geqslant 2$, then $\chi_{rlid}(H_p)= \log \omega(H_p)+1$.
\end{corollary}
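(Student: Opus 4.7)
The plan is to prove both inequalities of the claimed equality separately, invoking Lemma \ref{important} for the lower bound and exhibiting a concrete coloring for the upper bound.

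For the lower bound, I first observe that $H_p$ is twin-free: two distinct vertices $x_Q,x_{Q'}\in K$ with, say, $i\in Q\setminus Q'$ and $|Q|\geq 2$ are separated by $z_i$, the vertices $x_\emptyset$ and $x_{\{i\}}$ are separated by $y_i$, and the remaining cases ($y_i$ versus $y_j$, $y'_i$ versus $y'_j$, $z_i$ versus $z_j$) are immediate from the definition. Hence the $2^p$-clique $K$ already lives in $H_p/\mathcal{R}$, so $\omega(H_p/\mathcal{R})=2^p$, and Lemma \ref{important} yields $\chi_{rlid}(H_p)\geq \log 2^p+1=p+1$.

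For the upper bound, I propose the following coloring $c$ with palette $\{1,\ldots,p+1\}$: set $c(x_Q)=p+1$ for every subset $Q\subseteq\{1,\ldots,p\}$, set $c(y_i)=c(z_i)=i$ for each $i\in\{1,\ldots,p\}$, and set $c(y'_i)$ to be any color in $\{1,\ldots,p\}\setminus\{i\}$ (for $p\geq 2$ such a choice exists). A direct computation gives $c(N[x_\emptyset])=\{p+1\}$, $c(N[x_{\{i\}}])=\{p+1,i\}$, $c(N[x_Q])=\{p+1\}\cup Q$ for $|Q|\geq 2$, $c(N[y_i])=\{p+1,i,c(y'_i)\}$ (which has size three), $c(N[y'_i])=\{i,c(y'_i)\}$, and $c(N[z_i])=\{p+1,i\}$. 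I then verify the four edge types: inside $K$ the color sets uniquely encode $Q$ so adjacent clique vertices are separated; on $x_{\{i\}}y_i$ the two sets differ in size ($2$ vs.\ $3$); on $y_iy'_i$ exactly one of them contains $p+1$; and on $x_Qz_i$ with $|Q|\geq 2$ the sizes are $\geq 3$ and $2$ respectively. No other edges exist since $S_1,S_2,S_3$ are stable sets, so $c$ is a valid $rlid$-coloring and $\chi_{rlid}(H_p)\leq p+1$.

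The only real subtlety — and the point where a first naive attempt breaks down — is the requirement $c(y'_i)\neq i$: a choice $c(y'_i)=i$ would collapse $c(N[y_i])$ to $\{p+1,i\}=c(N[x_{\{i\}}])$ and thereby violate the $rlid$-condition across the edge $x_{\{i\}}y_i$. Once this is sidestepped, all remaining verifications are a finite, routine case analysis that essentially amounts to the observation that the $z_i$ (respectively $y_i$) act as binary coordinates encoding the subsets $Q$ of size $\geq 2$ (respectively $1$) among the vertices $x_Q$ of the big clique.
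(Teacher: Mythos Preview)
Your proof is correct and follows essentially the same approach as the paper: the lower bound comes from Lemma~\ref{important}, and the upper bound is witnessed by the very coloring the paper uses (the paper chooses the specific value $c(y'_i)=i+1 \bmod p$, of which your ``any color in $\{1,\dots,p\}\setminus\{i\}$'' is the natural generalization). Your write-up is in fact more complete, since you explicitly argue twin-freeness for the lower bound and verify each edge type for the upper bound, whereas the paper leaves both largely implicit.
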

\begin{proof}
 We define a $rlid$-coloring $c$ as follows:\\
$\bullet$ $c(x_Q)=p+1$ for all $Q\subseteq \mathcal P([p])$\\
$\bullet$ $c(y_i)=c(z_i)=i$ for all $i\in [p]$.\\
$\bullet$ $c(y'_i)=i+1 p\; \; mod\; \; p$ for all $i\in [p]$.\\
Now, we have $c(N[x_Q])=Q\cup \{p+1\}$ for all $Q\subseteq \mathcal P([p])$, $c(N[z_i])=\{i, p+1\}$ for each $i \in[p]$,
$c(N[y'_i])=\{i, i+1\; \;  mod \; \; p\}$ for each $i$ and  $c(N[y_i])=\{i, i+1 \; \;  mod \; \; p, p+1\}$ for each $i$.
Since there is no edge between any $y\in S_1$ and any $x_Q \in K$ with $|Q|\geqslant 2$ and no edge between any $z\in S_3$ 
and any $i_{\{i\}}$ for any 
$i \in [p]$, then $c$ defines a $rlid$-coloring.  
\end{proof}\\

From Corollary \ref{Hp}, we have $\chi_{rlid}(H_p) =p+1$  despite 
$\chi_{lid}(H_p) \geq \chi (H_p)\geq \omega (H_p)= 2^{p}$.

\section{Split graphs}\label{sec5}
\noindent Let $G=(K\cup S,  E)$ be a split graph with $K=\{x_1, x_2,...,x_k\}$ a clique of maximal size $\omega(G) =k$ and 
$S=\{s_1, s_2,..., s_p\}$ a stable of $G$. 
For this class of graphs, the $\chi_{rlid}$ is given by the following theorem :
\begin{theorem}\label{Split}
 Let $G=(K\cup S,E)$ be a connected split graph without twins.\\
Then,  $\log (\omega(G)) +2 \leq \chi_{rlid} (G)\leq \omega(G)+2$.
\end{theorem}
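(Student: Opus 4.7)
The plan is to establish the two bounds separately, with the lower bound adapting Lemma~\ref{important} via a case analysis on $q = |c(K)|$, and the upper bound by exhibiting an explicit $rlid$-coloring with $\omega(G)+2$ colors.

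For the lower bound, let $c$ be any $rlid$-coloring of $G$ and put $q = |c(K)|$, $p = |c(S) \setminus c(K)|$, so $\chi_{rlid}(G) = q+p$. Since $c(N[x]) \supseteq c(K)$ for every $x \in K$, two distinct vertices of $K$ can only be distinguished by colors outside $c(K)$; the $2^p$ possible values of $c(N_S(x)) \cap (c(S) \setminus c(K))$ must therefore produce $k$ distinct patterns, forcing $p \geq \log k$. When $q \geq 2$ the bound $\chi_{rlid}(G) \geq \log k + 2$ follows at once.

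The delicate case is $q=1$, say $c(K)=\{\alpha\}$. I would show that singleton distinguishing patterns are forbidden: if $c(N_S(x)) \setminus \{\alpha\} = \{\beta\}$ for some $x \in K$, then any $s \in N_S(x)$ with $c(s)=\beta$ would satisfy $c(N[s]) = \{\alpha,\beta\} = c(N[x])$, contradicting the $rlid$-property on $xs$; likewise, the empty distinguishing pattern requires $N_S(x)=\emptyset$, which by twin-freeness holds for at most one $x \in K$. Hence at most $1 + (2^p - p - 1) = 2^p - p$ patterns are realisable, so $k \leq 2^p - p$; this fails at $p = \log k$, yielding $p \geq \log k + 1$ and $\chi_{rlid}(G) \geq 1 + p \geq \log k + 2$.

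For the upper bound, I propose a $rlid$-coloring using the palette $\{1,\ldots,k+2\}$. Color $K$ with only the two colors $\{1,2\}$, choosing any partition $K = K_1 \cup K_2$ with both parts non-empty, and reserve the palette $\{3,\ldots,k+2\}$ of $k$ extras for $S$. Twin-freeness in $K$ forces the $k$ sets $N_S(x_1), \ldots, N_S(x_k)$ to be pairwise distinct, so a standard binary-splitting argument produces a separating subset $T \subseteq S$ with $|T| \leq k-1$ for which $N_S(x_i) \cap T$ are pairwise distinct. Assigning $|T|$ distinct extras to $T$ (one color per vertex) and a single remaining extras color to all of $S \setminus T$ makes the sets $c(N_S(x_i))$ pairwise distinct in the extras palette; this in turn distinguishes every pair inside $K$, since $c(N[x_i]) = \{1,2\} \cup c(N_S(x_i))$.

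The main obstacle is preventing forbidden-singleton collisions on $K$-to-$S$ edges: if some $s \in S$ is adjacent to vertices of both $K_1$ and $K_2$ and provides the only extras color in $c(N_S(x))$ for some $x \in N_K(s)$, then $c(N[s]) = \{1,2,c(s)\} = c(N[x])$, exactly mirroring the lower-bound obstruction. Handling these cases requires coordinating the $K$-partition with the extras-coloring of $S$; the freedom provided by the two $K$-colors together with the extras color not consumed by $T$ lets one recolor the offending vertices (typically by pushing a problematic $s$ into $c(K)$) while preserving pairwise distinctness of the extras-patterns produced by the separating coloring.
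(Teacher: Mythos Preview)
Your lower-bound argument follows the same line as the paper's: both split on $q=|c(K)|$ and, when $q=1$, show that certain colour patterns on $N_S(x)$ are impossible. Your counting of the forbidden singleton patterns is a little more systematic than the paper's single-witness contradiction, but the idea is the same and both versions share the same informality about non-power-of-two values of $\omega$.

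The upper bound, however, has a real gap: restricting $K$ to only two colours can fail outright, and your proposed repair does not rescue it. Take $K=\{x_1,x_2,x_3\}$, $S=\{s_1,s_2\}$ with edges $x_1s_1$, $x_2s_1$, $x_2s_2$, $x_3s_2$; this is a connected twin-free split graph with $\omega=3$. Here any separating $T\subseteq S$ is forced to be all of $S$, so $s_1,s_2$ receive distinct extras colours and $S\setminus T=\emptyset$. For each of the three bipartitions of $K$ into nonempty parts, at least one of $s_1,s_2$ is adjacent to both parts; that $s_i$ then has $c(N[s_i])=\{1,2,c(s_i)\}$, which equals $c(N[x])$ for whichever of $x_1,x_3$ has $s_i$ as its unique $S$-neighbour. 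Your suggested fix of pushing a problematic $s$ into $c(K)$ destroys the separation (after $c(s_1)\in\{1,2\}$ the extras-patterns of $x_2$ and $x_3$ coincide), and the spare extras colour has nowhere to go since $S\setminus T$ is empty.

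The paper's proof takes a different route for exactly this reason: it allows up to \emph{three} colours on $K$ (the colours $k,\,k+1,\,k+2$, placed on at most two specially chosen clique vertices) and carries out a substantial case analysis---on whether some $u\in K$ has $N[u]\cap S'=\emptyset$, on whether $K_1=\emptyset$, on the structure of $A=N[u]\cap S$, and so on. That case analysis is precisely the content your outline is missing, and your two-colour constraint on $K$ is too rigid to substitute for it.
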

\begin{proof}
\noindent Let us prove that $\chi_{rlid}(G) \geq \log(\omega(G))+2$. \\
In order to identify all vertices of $K$, we have $\omega(G) \leq 2^p$. Let $c$ be a $rlid$-coloring of $G$.\\ 
According to the proof of Theorem \ref{important}, we have $\mid c(V)\setminus c(K) \mid\geq p$. Let us prove that we 
need to add at least two colors to color $K$.\\
If $|c(K)|\geq 2$, then we are done.\\
If $\omega < 2^p$, then we are done.\\
Therefore, we can assume that $\omega(G) =2^p$ and $|c(K)|=1$.
For any vertex $x\in K$ put  $c(x)=p+1$.
Let $u\in S$ and denote by $I(u)$ the set of colors appearing in $N[u]$. 
Observe that $I(u)$ contains the color $p+1$. 
If $u$ is adjacent to some vertices in $K$. Since $\omega(G) =2^p$ then there exists a vertex $x\in K$ such that 
$c(N[x])=\{i,p+1\}$ 
with $i=c(v)$ and $v\in S$. This shows that $v$ is the vertex belonging to $S$, adjacent to $x$ which gives $c(N[x])=c(N[v])$,
 a contradiction. Then in this case, we need to add at least two colors in $K$ and we are done.

Now, let us prove $\chi_{rlid}(G)\leq \omega(G)+2$. 
One may assume that $G$ is a connected graph and $K$ is a maximal clique of $G$ of size $\omega$.
Start by proving that $k-1$ colors in $S$ suffice to distinguish all vertices in $K$. 
Inductively, we construct a subset $S'$ of $S$ with $|S'|\leq k-1$ (where $k=|K|$) such that for any  
two distinct vertices $x$ and $y$ in $K$, we have $N[x]\cap S' \neq N[y]\cap S'$.\\
Choose any vertex $u\in S$. Since $K$ is maximal then $K\cap N[u]\neq \emptyset$.
Denote $K_1=N[u]\cap K$ and  $K_2= K\setminus K_1$. Let $G_i$ ($i=1,2$) be the split graph induced by $K_i$ and $S_i$ 
defined by $v\in S_i$ iff  $v\in S$ and $N[v]\subseteq K_i$.
Let us prove that $G_i$, $i=1,2$, is twin-free and that $K_i$ ($i=1,2$) is maximal.\\
By construction,  $K_i$ ($i=1,2$) is maximal. \\
Now suppose that $x, y\in G_i$ are twins. Since $K_i$ is maximal then 
$x, y \in K_i$. By definition of $G_i$, any vertex $x_i\in V(G)\setminus V(G_i)$ is either adjacent or not to  $x$ and $y$. 
Therefore, $x$ and $y$ will be twins in $G$, which yields a contradiction.
Now, apply the induced hypothesis in $G_1$ and $G_2$. We get $|S'|\leq k-1$.\\
So set $S'=S_1\cup S_2$  and let $c(s_i)=i$ for any vertex $s_i\in S'$.
Since $N[x]\cap S' \neq N[y]\cap S'$ for all two distinct vertices $x,y\in K$ then there is at most one vertex namely 
$u\in K$ such that $N[u] \cap S' =\emptyset$.
Now, partition $K$ in $K_1$ and $K_2$ such that any vertex $x\in K_1$ satisfies $|N[x]\cap S'|=1$ and any vertex $y\in K_2$ 
satisfies  $|N[x]\cap S'|\geq 2$.
Let $c$ be a $rlid$-coloring of $G$.

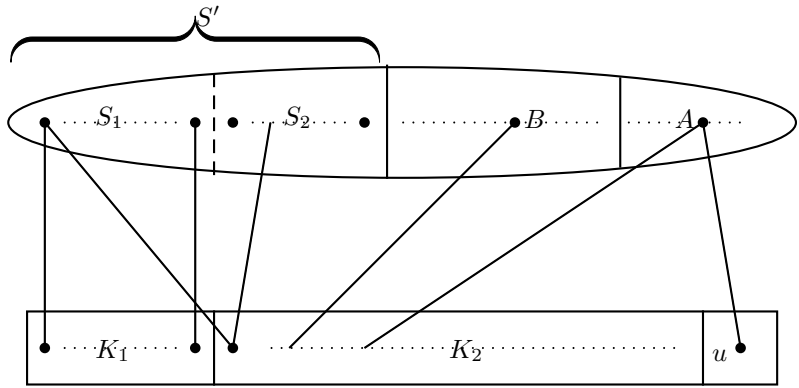
\begin{figure}[h]
\begin{center}
\begin{pspicture}(0,0)(11,7)
\psellipse[fillcolor=lightgray](5.5,4.5)(5.25,0.75)
\psframe[fillcolor=lightgray](0.5,1)(10.5,2)

\pscircle[fillstyle=solid,fillcolor=black](0.75,1.5){0.07}
\pscircle[fillstyle=solid,fillcolor=black](2.75,1.5){0.07}
\pscircle[fillstyle=solid,fillcolor=black](3.25,1.5){0.07}
\pscircle[fillstyle=solid,fillcolor=black](10,1.5){0.07}
\psline[linestyle=solid](3,1)(3,2)
\psline[linestyle=solid](9.5,1)(9.5,2)

\psline[linestyle=dotted](1,1.5)(2.5,1.5)
\psline[linestyle=dotted](3.75,1.5)(9.25,1.5)

\pscircle[fillstyle=solid,fillcolor=black](0.75,4.5){0.07}
\pscircle[fillstyle=solid,fillcolor=black](2.75,4.5){0.07}
\pscircle[fillstyle=solid,fillcolor=black](3.25,4.5){0.07}
\pscircle[fillstyle=solid,fillcolor=black](5,4.5){0.07}
\pscircle[fillstyle=solid,fillcolor=black](7,4.5){0.07}
\pscircle[fillstyle=solid,fillcolor=black](9.5,4.5){0.07}
\psline[linestyle=solid](5.3,3.75)(5.3,5.27)
\psline[linestyle=solid](8.4,3.9)(8.4,5.1)

\psline[linestyle=dotted](1,4.5)(2.5,4.5)
\psline[linestyle=dotted](3.5,4.5)(4.75,4.5)
\psline[linestyle=dotted](5.5,4.5)(8.1,4.5)
\psline[linestyle=dotted](8.5,4.5)(10,4.5)
\psline[linestyle=dashed](3,3.8)(3,5.15)
\uput[45](1.3,4.3){$S_1$}

\uput[45](3.8,4.3){$S_2$}
\psbrace*[linecolor=black,ref=lf,rot=-90](5.2,5.3)(0.3,5.3){$S'$}
\uput[45](7,4.3){$B$}
\uput[45](9,4.3){$A$}
\uput[45](1.3,1.2){$K_1$}
\uput[45](6,1.2){$K_2$}
\uput[45](9.5,1.2){$u$}
\psline[fillstyle=solid,fillcolor=black](0.75,1.5)(0.75,4.5)
\psline[fillstyle=solid,fillcolor=black](2.75,1.5)(2.75,4.5)
\psline[fillstyle=solid,fillcolor=black](3.25,1.5)(0.75,4.5)
\psline[fillstyle=solid,fillcolor=black](3.25,1.5)(3.75,4.5)
\psline[fillstyle=solid,fillcolor=black](4,1.5)(7,4.5)
\psline[fillstyle=solid,fillcolor=black](10,1.5)(9.5,4.5)
\psline[fillstyle=solid,fillcolor=black](9.5,4.5)(5,1.5)

\end{pspicture}
\caption{The split graph with its components $K$ and  $S$.}
\label{split}
\end{center}
\end{figure}
 
\textbf{\underline{Case 1} : there exists $u\in K$ such that $N[u]\cap S\neq \emptyset$}. \\
Let $A=N[u]\cap S$ and $B=S\setminus (S'\cup A)$ (see Figure.\ref{split}).\\
\textbf{\textit{Subcase 1.1}}. $K_1\neq \emptyset$ and $A= \emptyset$.
Consider the following coloring: assign to all vertices in $B\cup K_1\cup K_2$ the color $k$ and put $c(u)=k+1$.
Let $x\in K$ and $s\in S$ be two vertices in $G$. We know that all vertices of $K$ are distinguished by $S'$ and
since $k+1\in c(N[x])\setminus c(N[s])$ then $x$ and $s$ are distinguished.\\
\textbf{\textit{Subcase 1.2}}. $K_1\neq \emptyset$ and there exists $y\in K_1$ such that $N[y]\cap A\neq \emptyset$.
We define a coloring $c$ as follows: all vertices of $K_1\setminus \{y\}\cup K_2 \cup (S\setminus S')$ receive  the color $k$,
put $c(y)=k+1$ and $c(u)=k+2$.\\
We claim that any two adjacent vertices of $G$ are discriminated. Indeed, we know that all vertices of $K$ are distinguished
 by $S'$.
Now let us cheek the edges between $K$ and $S$. Let $x\in K$ and $s\in S$ be two adjacent vertices. Suppose that $N[x]=N[s]$.\\
$\bullet$ \textbf{If $x\in K_1$}, since $k+2\in c(N[x])$ then $s\in A$. Thus by assumption, $s$ and $x$ are not adjacent.\\
$\bullet$ \textbf{If $x\in K_2$}, since $k+1$ belongs to $c(N[x])$ then $s\in B\cup S'$. Thus $k+2$ belongs to 
$c(N[x])\cap c(N[s])$.\\
$\bullet$ \textbf{If $x=u$}, then $s\in A$. Thus $k+1\in c(N[x])\cap c(N[s])$, which yields a contradiction.\\
\textbf{\textit{Subcase 1.3}}. $K_1\neq \emptyset$ and for all $x\in K_1$ we have $N[x]\cap A\neq \emptyset$.\\
$\bullet$ \textbf{If $A=\{v\}$ }, 
then there exists $y\in K$ not adjacent to $v$.  Put $c(v)=k, \; \; c(u)=k+2, \; \; c(y)=k+1$ and 
 for all $z\in (K\setminus \{y\}) \cup B$, put $c(z)=k$.\\
If $x\in K$ and $s\in S\setminus A$,  since $k+2\in c(N[x])\setminus c(N[s])$  then $x$ and $s$ are distinguished .\\
Additionally, the pair $x\in K$ and $s=v$ are also distinguished since $k+1\in c(N[x])\setminus c(N[v])$.\\
$\bullet$ \textbf{If $|A|\geq 2$ },  
consider the following coloring: the vertices of $B\cap K$ receive the color $k$. There exists a vertex $w\in A$ such that 
$c(w)=k+1$ and $\forall v\in A\setminus \{w\}, \; \;c(v)=k+2$\\
Let $x\in K$ and $s\in S$ be two vertices. \\
- \textbf{If $s\notin S'$ and $x\neq u$ }, 
then the color $k+1\in c(N[x])\setminus c(N[s])$.\\
- \textbf{If $s\in S'\cup B$ and $x=u$},
the vertices $x$ and $s$ are not adjacent.\\
- \textbf{If $s\in A$ and $x=u$},
one of the colors $k+1$ or $k+2$ belongs to $c(N[s])$ but both are in $c(N[u])$.\\
- \textbf{If $s\in S'$ and $x\in K_2$},
there exists a vertex $s'\in S'\setminus \{s\}$ such that $c(s')\neq c(s)$. We have $c(s_j)\in c(N[x])\setminus c(N[s])$\\
- \textbf{ If $x\in K_1$ and $s\in S'$}, 
either the color $k+1$ or $k+2$ belongs to  $ (N[x])\setminus c(N[s])$.

\textbf{\underline{Case 2} : The vertex $u$ does not exist in $K$}. In this case $A=\emptyset$ and :\\
$\bullet$ If $K_1 =\emptyset$,
consider the following coloring: all vertices in $K\cup B$ receive the color $k$. \\
Let $x\in K$ and $s\in S$ be two vertices in $G$.Then: \\
- \textbf{If $s\in S'$}, since  $|N[x]\cap S'|\geq 2$ then there exists a vertex $s'\in S'$ 
such that $s\neq s'$ and $c(s')\in c(N[x])\setminus c(N[s])$.\\
- \textbf{If $s\in S\setminus S'$}, the vertex $x$ is adjacent to at least two vertices in $S'$.\\
$\bullet \; \; K_1\neq \emptyset$.  If there exists $ x\in K$ such that $|N[x]\cap S_1|=1$.\\
There exists a vertex $y\in K_2$ which is not adjacent to $s_1\in S_1$. Consider the following coloring: 
put $c(x_1)=k+1$ ($x_1$ is the vertex belonging to $K_1$ and has only $s_1$ as neighbor in $S$) and $c(y)=k+2$. 
All vertices in $B\cup K\setminus \{x_1, y\}$ receive the color $k$.\\ 
Let $x\in K$ and $s\in S$ be two vertices in $G$:\\
- \textbf{If $x\in K_1\setminus \{x_1\}$ and $s\in S'$}, then $k+1\in c(N[x])\setminus c(N[s])$.\\
- \textbf{If $x=x_1$ and $s_1$}, we have $k+2\in c(N[x])\setminus c(N[s])$.\\
- \textbf{If $x\in K_1$ and $s\in S\setminus S'$}, there exists a color $i=c(s_i)$ with 
$c(s_i)\in S'$ such that $i\in c(N[x])\setminus c(N[s])$.\\
- \textbf{If $x\in K_2$ and $s\in S'$}, the color $j\in c(N[x])\setminus c(N[s])$ with $j=c(s_j), \; \; s_j\in S'$ 
and $s_j\neq s$.\\
- \textbf{If $x\in K_2$ and $s\in S\setminus S'$}, the vertex $x$ has at least two neighbors in $S'$.
 \end{proof}\\
 
\noindent In \cite{Parreau}, the following result is given:
\begin{theorem}\cite{Parreau}
 Let $G=(K\cup S, E)$ be a split graph. \\
If $\omega (G)\geqslant 3$ or $G$ is a star, then $\chi_{lid}(G) \leq 2\omega (G)-1$.
\end{theorem}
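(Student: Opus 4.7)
The plan is to color the maximum clique $K=\{x_1,\ldots,x_\omega\}$ with colors $c(x_i)=i$, and then extend the coloring to $S$ using at most $\omega-1$ additional colors from $\{\omega+1,\ldots,2\omega-1\}$, for a total of $2\omega-1$ colors. For every $s\in S$, set $t(s)=|K\setminus N(s)|$; since $K$ is a maximum clique, $t(s)\geq 1$.

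First I would handle the edges between $K$ and $S$. For $s\in S$ with $t(s)\geq 2$, there are at least two colors in $\{1,\ldots,\omega\}$ missing from $c(N(s)\cap K)$, so picking $c(s)$ among them preserves properness and forces $c(N[s])\subsetneq\{1,\ldots,\omega\}$, which is strictly smaller than $c(N[x])\supseteq\{1,\ldots,\omega\}$ for any adjacent $x\in K$. For $s\in S$ with $t(s)=1$, the only color of $\{1,\ldots,\omega\}$ compatible with properness would be the color of the unique $z_s\in K\setminus N(s)$, which forces $c(N[s])=\{1,\ldots,\omega\}$ and risks coinciding with $c(N[x])$; I would instead use a color in $\{\omega+1,\ldots,2\omega-1\}$, so that $c(N[s])$ misses $c(z_s)\in c(N[x])$, guaranteeing the two sets differ.

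The remaining difficulty is distinguishing adjacent non-twin pairs $x,y\in K$: both $c(N[x])$ and $c(N[y])$ contain the full palette $\{1,\ldots,\omega\}$, so their difference lies entirely among the extra colors coming from $S_B=\{s\in S:c(s)>\omega\}$. I would mimic the inductive construction from the proof of Theorem~\ref{Split}: pick $u\in S$, split $K$ into $K_1=N(u)\cap K$ and $K_2=K\setminus K_1$, recurse on the two induced split subgraphs, and build up a set $S'\subseteq S$ of size at most $\omega-1$ that separates all pairs of $K$. Giving the vertices of $S'$ pairwise distinct colors from $\{\omega+1,\ldots,2\omega-1\}$ yields distinct signatures $c(N(x)\cap S')$, and ensuring $S'\subseteq S_B$ promotes this to $c(N(x)\cap S_B)\neq c(N(y)\cap S_B)$.

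The hard part is reconciling the two required inclusions $\{s:t(s)=1\}\subseteq S_B$ and $S'\subseteq S_B$ within the budget of $\omega-1$ extra colors: the vertices forced into $S_B$ by the $t(s)=1$ constraint might create accidental coincidences $c(N(x)\cap S_B)=c(N(y)\cap S_B)$ for non-twin pairs that $S'$ alone would have prevented. I would overcome this by folding all $s$ with $t(s)=1$ into $S'$ during the recursion, preferring such vertices as the pivot $u$ whenever one is available, and recycling colors carefully so that $|c(S_B)|\leq\omega-1$. The degenerate case $\omega=2$ with $G$ a star is handled separately by a direct three-coloring assigning one color to the center, one to a distinguished leaf, and a third to all remaining leaves.
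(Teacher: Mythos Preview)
This theorem is quoted from \cite{Parreau} without proof in the present paper, so there is no argument here to compare your proposal against. That said, your approach has a genuine gap.

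The rule forcing every $s\in S$ with $t(s)=1$ to receive a colour from $\{\omega+1,\ldots,2\omega-1\}$ cannot always be met within the budget of $\omega-1$ extra colours, and the vague ``recycling colours carefully'' does not rescue it. Take $\omega=3$, $K=\{x_1,x_2,x_3\}$, and $S=\{s_1,s_2,s_3\}$ with $N(s_i)=K\setminus\{x_i\}$. All three $s_i$ have $t(s_i)=1$, so your rule places each of them in $\{4,5\}$; by pigeonhole two share a colour, say $c(s_1)=c(s_2)$. Then
\[
c(N[x_1])=\{1,2,3\}\cup\{c(s_2),c(s_3)\}=\{1,2,3\}\cup\{c(s_1),c(s_3)\}=c(N[x_2]),
\]
yet $x_1$ and $x_2$ are adjacent non-twins, so the colouring is not lid. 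A valid $5$-colouring of this graph does exist (for instance $c(x_i)=i$, $c(s_1)=4$, $c(s_2)=5$, $c(s_3)=3$), and it works precisely because one of the $t(s)=1$ vertices is allowed to take the colour of its unique non-neighbour in $K$: the ``risk'' you flag, that $c(N[s_3])=\{1,2,3\}$ might coincide with some $c(N[x])$, is averted not by the colour of $s_3$ itself but by the extra colours that each adjacent $x$ picks up from its \emph{other} neighbours in $S$. Your scheme forbids this move a priori and therefore cannot reach $2\omega-1$ in general; the separating-set recursion you borrow from Theorem~\ref{Split} addresses $\chi_{rlid}$, where properness is not required, and does not transfer to the lid setting without a different idea for handling the $t(s)=1$ vertices.
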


In the following, we give two split graphs $Q_1(p)$ and $Q_2(p)$ (with $p\geq 2$) such that the lower bound of  
Theorem \ref{Split} is tight for the first, and the upper bound of Theorem \ref{split} is attained for the second graph.\\ 
The first graph $Q_1(p)$ is a split graph where  
$K=\{x_Q, Q\in \mathcal{P}(p)\}$ with $Q$ is a subset of 
$\{1, \dots, p\}$ and the stable $S=\{s_1, s_2, \dots, s_{p-1}\}$. The size of $K$ is $2^{p-1}$. 
The second graph $Q_2(p)$  is a split graph where the set-vertex of  $K$ is $\{v_1, v_2, \dots, v_{p-1}, v_p\}$ and 
the set-vertex of  $S$ is $\{s_1, s_2, \dots, s_{p-1}\}$. The edges are defined by:
\begin{itemize}
 \item  $v_is_i$ is an edge for $i=1, \dots, p-1$;
 \item  $v_iv_j$ is an edge for  $1\leq j\leq p$.
\end{itemize}

\begin{proposition}
 For $p\geq 2$, we have $\chi_{rlid}(Q_1(p))=\log(\omega (Q_1(p)))+2$. 
\end{proposition}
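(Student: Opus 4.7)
The plan is to sandwich $\chi_{rlid}(Q_1(p))$ between $p+1$ and $p+1$. Since $Q_1(p)$ is a connected twin-free split graph (two $x_Q$'s differ in their neighbors in $S$ and two $s_i$'s differ in their neighbors in $K$) whose maximum clique is the part $K$ of size $2^{p-1}$ (no $s_i$ is adjacent to all of $K$), Theorem~\ref{Split} gives directly the lower bound
\[
\chi_{rlid}(Q_1(p)) \;\geq\; \log\omega(Q_1(p))+2 \;=\; (p-1)+2 \;=\; p+1.
\]

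For the matching upper bound I would exhibit an explicit $rlid$-coloring with $p+1$ colors: set $c(s_i)=i$ for $i\in\{1,\dots,p-1\}$, set $c(x_{\emptyset})=p+1$, and set $c(x_Q)=p$ for every nonempty~$Q$. Then one computes
\[
c(N[x_Q]) \;=\; \{p,p+1\}\cup Q, \qquad c(N[s_i]) \;=\; \{i,p\},
\]
using that $K$ already contains both colors $p$ and $p+1$ (so they both appear in every $c(N[x_Q])$), while each neighbor of $s_i$ in $K$ is some $x_Q$ with $i\in Q\neq\emptyset$, hence receives color $p$. Checking the two types of edges is then immediate: for adjacent $x_Q,x_{Q'}\in K$, the sets $\{p,p+1\}\cup Q$ and $\{p,p+1\}\cup Q'$ differ because $Q\neq Q'$; for adjacent $x_Q$ and $s_i$ (i.e.\ $i\in Q$), the color $p+1$ lies in $c(N[x_Q])$ but not in $c(N[s_i])$.

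The main obstacle is exactly the distinguishability of each $x_{\{i\}}$ from $s_i$: these two vertices are adjacent and $N[s_i]\subseteq N[x_{\{i\}}]$, so a naive coloring that uses a single color on all of $K$ immediately collapses $c(N[x_{\{i\}}])$ and $c(N[s_i])$. The trick is to use $x_\emptyset$ as a ``sentinel'' vertex of color $p+1$: it belongs to $K$ (so its color appears in the neighborhood of every $x_Q$) but it is adjacent to no $s_i$ (so its color never appears in $c(N[s_i])$), thereby separating every clique vertex from every stable vertex with a single extra color. Combined with the color classes $\{1,\dots,p-1\}$ on $S$, which take care of separation inside $K$, this yields a valid $(p+1)$-coloring and closes the sandwich.
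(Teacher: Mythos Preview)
Your proof is correct and follows essentially the same approach as the paper: the explicit $(p+1)$-coloring you exhibit (colors $1,\dots,p-1$ on $S$, color $p$ on $K\setminus\{x_\emptyset\}$, color $p+1$ on $x_\emptyset$) is exactly the coloring the paper uses, and your verification is in fact cleaner and more complete than the paper's. The only difference is in the lower bound: you invoke Theorem~\ref{Split} directly, whereas the paper re-argues from scratch that the $s_i$ must receive pairwise distinct colors and that $c(x_\emptyset)$ must be fresh---but since $Q_1(p)$ is introduced precisely to show that the lower bound of Theorem~\ref{Split} is tight, your shortcut is the natural move.
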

\begin{proof}
Let $c$ be a $rlid$-coloring of $Q_1(p)$. 
Suppose that there exist  two vertices $s_i$ and $s_j$ such that  $c(s_i)= c(s_j)$ for  
$1\leq i, \; j\leq p-1, \; \; i\neq j$ and $N[x_Q]\bigtriangleup N[x_{Q'}]=\{s_i, s_j\}$, then we obtain 
$c(N[x_Q])=c(N[x_{Q'}])$, a contradiction. Then, all $s_j$ ($1\leq i\leq p-1$) have different colors.  \\
Let $x_Q$ and $x_{Q'}$ be two vertices of $K$ such that $|Q|> |Q'|$. We have $|N[x_Q]|> |N[x_Q]|$.
In $K$, if there exists a vertex $x_Q$ having a unique neighbor $s_j\in S$ ($j=1, \dots, p-1$), then 
$N[x_Q]\bigtriangleup N[s_j]=\{x_\emptyset\}$, which implies that $c(x_\emptyset)\notin c(K\setminus \{x_\emptyset\}\cup S)$.\\
One may assume  that the  coloring $c$  of $Q_1(p)$  is defined as follows:  $c(s_i)=i$ for all $1\leq i\leq p-1$. For all 
$Q\in \mathcal{P}(p)$,  put $c(v_Q)=p$ and $c(v_\emptyset)=p+1$.\\  
Let $x_Q$ and $x_{Q'}$ be two vertices of $K$ with $|Q|>|Q'|$. We have $|c(N[x_Q])\cap S|> |c(N[x_Q])\cap S|$.
The vertex $x_\emptyset$ has not  neighbors in $S$, then it is distinguished from any vertex $x_Q$ with 
$Q\in {\mathcal P}(p)$ and $Q\neq \emptyset$. Moreover, the vertex $x_\emptyset$ is used to distinguish $s_j$ for $j=1,2$ and 
 $x_Q\in K$  with $Q\in {\mathcal P}(p)$ and $Q\neq \emptyset$. 
\end{proof}\\

For the graph $Q_2(p)$ we have:
\begin{proposition}
For $p\geq 2$, we have $\chi_{rlid}(Q_2(p))=\omega (Q_2(p))+1$. 
\end{proposition}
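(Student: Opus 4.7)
The plan is as follows. Since $K=\{v_1,\dots,v_p\}$ is a clique of size $p$, we have $\omega(Q_2(p))=p$, so the claim reduces to $\chi_{rlid}(Q_2(p))=p+1$, which I will prove by matching upper and lower bounds.

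For the upper bound, I will exhibit an explicit $(p+1)$-coloring $c$: set $c(s_i)=i$ for each $i\in\{1,\dots,p-1\}$, $c(v_i)=p$ for each $i<p$, and $c(v_p)=p+1$. Then $c(N[v_i])=\{i,p,p+1\}$ for $i<p$, $c(N[v_p])=\{p,p+1\}$, and $c(N[s_i])=\{i,p\}$. A one-line verification over the three types of edges $v_iv_j$ ($i,j<p$), $v_iv_p$, and $v_is_i$ shows that adjacent vertices receive distinct closed-neighborhood color sets, so $\chi_{rlid}(Q_2(p))\le p+1$.

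For the lower bound, let $c$ be an arbitrary $rlid$-coloring of $Q_2(p)$, and I will establish three facts. First, for $1\le i<j\le p-1$, the adjacency $v_iv_j\in E$ together with $N[v_i]\bigtriangleup N[v_j]=\{s_i,s_j\}$ forces $c(s_i)\ne c(s_j)$; hence $|c(S)|=p-1$. Second, for each $i<p$ the edge $v_iv_p$ satisfies $N[v_i]\setminus N[v_p]=\{s_i\}$, so we must have $c(s_i)\notin c(K)$ (otherwise $c(N[v_i])=c(K)=c(N[v_p])$); hence $c(S)\cap c(K)=\emptyset$. Third, if $|c(K)|=1$ with common color $\alpha$, then for any $i<p$, $c(N[v_i])=\{\alpha,c(s_i)\}=c(N[s_i])$ since $c(v_i)=\alpha$, contradicting $v_is_i\in E$; so $|c(K)|\ge 2$. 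Combining the three facts gives at least $|c(K)|+|c(S)|\ge 2+(p-1)=p+1$ colors used.

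The main obstacle is minor; it amounts to bookkeeping the symmetric differences $N[x]\bigtriangleup N[y]$ that control each type of adjacency. The coloring displayed above is essentially forced by the same three observations that drive the lower bound, so the two halves of the proof mirror each other.
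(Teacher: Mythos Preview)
Your proof is correct and follows essentially the same approach as the paper's: both argue that the $s_i$ receive pairwise distinct colors (via the edges $v_iv_j$), that these colors avoid $c(K)$ (via the edges $v_iv_p$), and that $|c(K)|\ge 2$ (via the edges $v_is_i$), and both exhibit the same explicit $(p+1)$-coloring for the upper bound. Your write-up is in fact a bit more tightly organized than the paper's, but the content is identical.
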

\begin{proof}
Let $c$ be  a $rlid$-coloring of $Q_2(p)$. 
For any vertex $s_i\in S$ with $i=1, \dots, p-1$, we have $c(s_i)\notin c(V\setminus \{s_1,  \dots, s_{p-1}\})$. Otherwise, 
we get $c(N[v_i])=c(N[v_p])$. \\
Moreover, for $1\leq i, \; j\leq  p-1$ with $i\neq j$, we have  $c(s_i)\neq c(s_j)$. Otherwise, we obtain 
$c(N[v_i])=c(N[v_j])$.\\
One may assume that the coloring $c$ of $Q_2(p)$ is defined by: for all $s_i\in S$, put   $c(s_i)=i$ with $i=1, \dots, p-1$. 
For any vertex $v_i\in K, \; \; c(v_i)\notin \{1, \dots, p-1\}$ for all $i=1, \dots, p-1$.\\
If $|c(K)|=1$ then $c(N[s_i])=c(N[v_i])$ for any $i=1, \dots, p-1$. Thus  $|c(K)|\geq 2$, which give 
$\chi_{rlid}(Q_2(p))\geq \omega (Q_2(p))+1$.\\
To conclude, observe that the coloring $c$ defined by $c(s_i)=i$ for all $i=1, \dots, p-1, \; \; c(v_i)=p$ for all $i<p$ and 
$c(v_p)=p+1$. Then $c$ is $(p+1)-rlid$-coloring of $Q_1(p)$.
\end{proof}

\section{Discussion and open problems}\label{sec7}
\noindent We have introduced the notion of \textit{relaxed locally identifying coloring} of graphs and we were interested in  
giving the minimum of colors used in a relaxed locally identifying coloring $\chi_{rlid}$. 

In Section \ref{sec1}, we have considered the graph containing twins and given a lower and an upper bound for $\chi_{rlid}$ 
for this
graphs depending on $\chi_{rlid}$ of the maximal twin-free subgraph associated to this graph.\\
In Section \ref{sec3}, we have studied the problem of complexity of $rlid$-coloring. We have shown that the problem of deciding that
$\chi_{rlid}$  equals  $3$ is $NP$-complete for $2$-degenerate planar graphs, and polynomial for bipartite graphs.\\ 
In Section \ref{sec4}, we have given a graph for which the lower bound of Theorem \ref{premiere} is tight. 
We characterized  graphs $G$  satisfying $\chi_{rlid}(G)=n$. 
We have also given a lower bound of $\chi_{rlid}$ and we exhibited a family of graphs for which 
this bound is tight. \\
In Theorem \ref{Split}, we proved that for a \textit{Split} graph $G, \; \; \chi_{rlid}(G)\leq \omega(G)+2$. 
Since we didn't find any split graph attaining this bound, we think that it will be improved.We propose the following conjecture:
\begin{conj}
 If $G$ is a free-twin split graph, then $\chi_{rlid}(G)\leq \omega(G)+1$.
\end{conj}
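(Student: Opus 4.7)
My plan is to adapt the proof of Theorem \ref{Split} in order to spare one of the two extra colours it uses. Let $k=\omega(G)$ and let $K$ be a maximum clique of $G$. As in Theorem \ref{Split}, I would first build $S'\subseteq S$ with $|S'|\leq k-1$ such that $x \mapsto N[x]\cap S'$ is injective on $K$, and colour $S'$ injectively with $\{1,\dots,k-1\}$. The remaining task is to colour $K$ and $S\setminus S'$ using only two new colours $k$ and $k+1$, totalling $k+1$ colours.

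If we try the uniform rule ``give $K$ colour $k$ and $S\setminus S'$ colour $k+1$'', then edges inside $K$ are already handled: for adjacent $x,y\in K$, the sets $c(N[x])$ and $c(N[y])$ agree on $\{k,k+1\}$ but differ on $\{1,\dots,k-1\}$ thanks to the injectivity of $x\mapsto N[x]\cap S'$. The genuine difficulties live on cross-edges between $K$ and $S$, and they reduce to two bad patterns: (a) an $x\in K_1$ with unique $S'$-neighbour $s$ and no neighbour in $S\setminus S'$ satisfies $c(N[x])=c(N[s])$; (b) if there exists $u\in K$ with $N(u)\cap S'=\emptyset$ and $u$ has a neighbour $s\in S\setminus S'$, then $c(N[u])=c(N[s])$. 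These are exactly the coincidences that forced Theorem \ref{Split} to introduce a third extra colour.

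To repair each pattern, I would exploit the maximality of $K$. In both cases, some witness $y^\star \in K\setminus N[s]$ (with $y^\star\neq u$ in case (b)) exists because $K\cup\{s\}$ is not a clique. Assigning a distinguishing colour to such a $y^\star$ — either reusing a colour from $\{1,\dots,k-1\}$, or the colour $k+1$, or setting $c(u)=c(s)$ in case (b) — forces $c(y^\star)\in c(N[x])\setminus c(N[s])$ in case (a), or $c(y^\star)\in c(N[u])\setminus c(N[s])$ in case (b), and hence breaks the collision without perturbing the distinguishing of edges inside $K$.

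The main obstacle is global consistency. A single colouring of $K\cup(S\setminus S')$ must simultaneously supply a distinguishing witness for every bad pattern, while not creating any new coincidence at the vertices whose colours have been altered. This seems to require a careful case analysis on the structure of $K_1$ and of $A:=N(u)\cap S$, possibly combined with a judicious enlargement of $S'$ by one vertex adjacent to $u$ (which would allow $K$ to be single-coloured, at the price of one extra colour inside $S'$, still amounting to $k+1$ colours in total). Executing this bookkeeping rigorously, in every configuration of $(K_1,A)$, without falling back to the third extra colour used in Theorem \ref{Split}, is where the real work of the conjecture lies, and presumably why the authors leave it open.
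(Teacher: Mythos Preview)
The statement you are attempting is a \emph{conjecture} in the paper, not a theorem: the authors explicitly leave it open, remarking that they ``didn't find any split graph attaining this bound'' of $\omega(G)+2$ and proposing the improvement to $\omega(G)+1$ without proof. There is therefore no paper argument to compare your proposal against.

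Your outline is a sensible starting point and you are honest that it is not a proof. One concrete warning: among your proposed repairs you suggest recolouring some $y^\star\in K$ with a colour from $\{1,\dots,k-1\}$. Since $y^\star$ lies in every closed neighbourhood $N[x]$ for $x\in K$, this injects the same colour into every $c(N[x])$, which can collapse the very distinctions on $K$ that the injective map $x\mapsto N[x]\cap S'$ was meant to guarantee (e.g.\ if $N[x]\cap S'=\{s_i\}$ and $N[y]\cap S'=\emptyset$, giving $y^\star$ colour $i$ makes both neighbourhoods contain $i$). So that particular branch of the repair is unsafe; the other branches (using colour $k+1$ on $y^\star$, or enlarging $S'$ by one vertex adjacent to $u$) are the more promising directions, and your final paragraph correctly identifies the simultaneous-repair bookkeeping as the unresolved core of the problem.
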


The notion of \textit{relaxed locally identifying chromatic number} for finite graphs is new. It might be interesting to 
investigate other graphs as cographs, planar graphs, outerplanar graphs, line graphs, interval graphs, 
for graphs with a given maximum degree.

\end{document}